\title[Fano threefolds of degree 22]{On Fano threefolds of degree $22$ after 
Cheltsov and Shramov}
\author{Kento Fujita} 
\date{\today}
\subjclass[2010]{Primary 14J45; Secondary 14L24}
\keywords{Fano varieties, K-stability, K\"ahler-Einstein metrics}
\address{Department of Mathematics, Graduate School of Science, Osaka University, 
Toyonaka, Osaka 560-0043, Japan}
\email{fujita@math.sci.osaka-u.ac.jp}
\newcommand{\pr}{\mathbb{P}}
\newcommand{\Z}{\mathbb{Z}}
\newcommand{\Q}{\mathbb{Q}}
\newcommand{\R}{\mathbb{R}}
\newcommand{\C}{\mathbb{C}}
\newcommand{\G}{\mathbb{G}}
\newcommand{\Supp}{\operatorname{Supp}}
\newcommand{\Pic}{\operatorname{Pic}}
\newcommand{\rank}{\operatorname{rank}}
\newcommand{\Aut}{\operatorname{Aut}}
\newcommand{\lct}{\operatorname{lct}}
\newcommand{\ord}{\operatorname{ord}}
\newcommand{\vol}{\operatorname{vol}}
\newcommand{\PGL}{\operatorname{PGL}}
\newcommand{\SL}{\operatorname{SL}}
\newcommand{\MU}{\operatorname{MU}}
\newcommand{\Nklt}{\operatorname{Nklt}}
\newcommand{\aaa}{\operatorname{a}}
\newcommand{\sC}{\mathcal{C}}
\newcommand{\sO}{\mathcal{O}}
\newcommand{\sD}{\mathcal{D}}
\newtheorem{thm}{Theorem}[section]
\newtheorem{lemma}[thm]{Lemma}
\newtheorem{proposition}[thm]{Proposition}
\newtheorem{corollary}[thm]{Corollary}
\newtheorem{claim}[thm]{Claim}
\theoremstyle{definition}
\newtheorem{definition}[thm]{Definition}
\newtheorem*{ack}{Acknowledgments}
\begin{document}

\maketitle 

\begin{abstract}
It has been known that nonsingular Fano threefolds of Picard rank one with 
the anti-canonical degree $22$ 
admitting faithful actions of the multiplicative group form a one-dimensional family. 
Cheltsov and Shramov showed that all but two of them admit K\"ahler-Einstein metrics.  
In this paper, we show that the remaining Fano threefolds also admit 
K\"ahler-Einstein metrics. 
\end{abstract}

\setcounter{tocdepth}{1}
\tableofcontents

\section{Introduction}\label{intro_section}

Let $X$ be a nonsingular Fano threefold over the complex number field with 
$\Pic(X)=\Z[-K_X]$. Such $X$ with large 
automorphism groups have been studied by many authors. For example, 
Mukai and Umemura systematically studied in \cite{MU} the Fano threefold $V^{\MU}$ with 
the anti-canonical degree $22$, so-called 
\emph{the Mukai-Umemura threefold}, which is obtained by the unique 
$\SL(2, \C)$-equivariant nonsingular projective 
compactification of $\SL(2, \C)/I_h$ with the Picard rank one, 
where $I_h\subset\SL(2, \C)$ is the icosahedral group. 
The automorphism group of $V^{\MU}$ is equal to $\PGL(2, \C)$. 
On the other hand, Prokhorov showed in 
\cite{Prok} that, if $\Aut(X)$ is not finite, then the anti-canonical degree 
of $X$ must be equal to $22$. Moreover, he determined all of such $X$. 
For example, there is a unique Fano threefold $V^{\aaa}$ such that 
$\Aut^0(V^{\aaa})$ is equal to the additive group $\C^+$. 

Nowadays, the structures of such $X$ are well-understood 
thanks to the works of Kuznetsov, Prokhorov, Shramov \cite{KPS}, and Kuznetzov, 
Prokhorov \cite{KP}. The family of nonsingular Fano threefolds $X$ with $\Pic(X)=\Z[-K_X]$, 
$(-K_X)^{\cdot 3}=22$, $X\not\simeq V^{\aaa}$ and $\Aut(X)$ infinite 
are parametrized by 
$\C\setminus\{0,1\}$. Let us denote the family by $\{V_u\}_{u\in\C\setminus\{0,1\}}$. 
Then $V_{-1/4}$ is isomorphic to $V^{\MU}$, and the automorphism group of 
$V_u$ is equal to $G:=\C^*\rtimes(\Z/2\Z)$ unless $u=-1/4$. 
Moreover, any $V_u$ can be obtained by the two-ray game from the blowup of 
some $3$-dimensional quadric hypersurface along a 
certain nonsingular sextic rational curve. 

In \cite{CS}, Cheltsov and Shramov considered the problem for the existence of 
K\"ahler-Einstein metrics for the above $V_u$. If $u=-1/4$, then the Fano threefold 
is the Mukai-Umemura threefold $V^{\MU}$. In this case, Donaldson already 
showed in \cite{don-alpha} the existence of K\"ahler-Einstein metrics on $V^{\MU}$
by showing that the $\PGL(2,\C)$-invariant \emph{$\alpha$-invariant} 
$\alpha_{\PGL(2,\C)}(V^{\MU})$ (see \S \ref{Fano_section}) of $V^{\MU}$ 
is equal to $5/6$. 
In fact, Tian showed in \cite{Tia87} that, for a Fano manifold $X$ and a reductive subgroup 
$\Gamma\subset\Aut(X)$, if $\alpha_\Gamma(X)>\dim X/(\dim X+1)$ holds, then 
$X$ admits K\"ahler-Einstein metrics. Cheltsov and Shramov considered the 
remaining cases by evaluating the $G$-invariant $\alpha$-invariant of $V_u$. 
More precisely, they showed the following: 

\begin{thm}[{\cite[Theorem 1.5]{CS}}]\label{CS_thm}
We have 
\[
\alpha_G(V_u)=\begin{cases}
4/5 & \text{if }u\neq 3/4 \text{ and }u\neq 2, \\
3/4 & \text{if }u=3/4, \\
2/3 & \text{if }u=2
.\end{cases}
\]
\end{thm}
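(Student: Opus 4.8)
The plan is to compute the invariant $\alpha$-invariant as the infimum
\[
\alpha_G(V_u)=\inf\left\{\lct(V_u,D):D\sim_\Q-K_{V_u}\text{ effective and }G\text{-invariant}\right\}
\]
by first making the $G$-geometry explicit and then reducing to a local analysis along the fixed locus of the torus. Since $\Aut^0(V_u)=\C^*$, I would begin from the two-ray game description of $V_u$ as a modification of the blowup of a quadric threefold along a sextic rational curve, write down the $\C^*$-action concretely, and record its fixed points, its one-dimensional orbit closures, and how the involution generating $G=\C^*\rtimes(\Z/2\Z)$ permutes these loci. Decomposing $H^0(V_u,-K_{V_u})$ into $\C^*$-weight spaces then furnishes an explicit basis in which every $G$-invariant member of $|-K_{V_u}|$ can be described, so that the infimum above ranges over a very restricted, combinatorially organized family of divisors.

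The key reduction is that for any $G$-invariant pair the non-klt locus $\Nklt(V_u,\lambda D)$ is itself $G$-invariant, hence $\C^*$-invariant, and is therefore supported on the finitely many $\C^*$-fixed points and invariant curves identified above. This localizes the whole problem: to bound $\lct(V_u,D)$ from below it suffices, for each invariant curve $C$ and each invariant point $p$, to bound $\mult_C D$ together with the local singularity of $D$ at $p$, using inversion of adjunction along $C$ and a weighted-blowup estimate at $p$. Because $-K_{V_u}$ generates the Picard group and $D\sim_\Q-K_{V_u}$, intersecting $D$ with the invariant curves yields uniform numerical bounds on the $\mult_C D$ that are independent of $D$, and these feed directly into the adjunction estimates.

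To obtain the matching upper bound for generic $u$ I would exhibit a single $G$-invariant divisor $D_0\in|-K_{V_u}|_\Q$ realizing the value $4/5$—naturally, the invariant surface (up to the involution) most tangent to the distinguished invariant curve, or a suitable combination of the low-weight invariant surfaces—and verify by direct local computation that $\lct(V_u,D_0)=4/5$. The number $4/5$ should appear as the generic balance between the multiplicity of $D_0$ along that invariant curve and the discrepancy coefficient produced by adjunction there, and the lower-bound analysis of the previous paragraph should show that no $G$-invariant divisor does worse when $u\neq 3/4,\,2$.

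The main obstacle, and the origin of the three cases, is the degeneration of this local picture at $u=3/4$ and $u=2$, where I expect an invariant curve to acquire an extra tangency with the invariant surfaces, or a fixed point to become a more degenerate singularity of the optimal invariant divisor, so that a $G$-invariant $D$ of strictly smaller threshold ($3/4$, respectively $2/3$) becomes available while the generic bound of $4/5$ fails. The delicate work is to pin down exactly which coincidence occurs at each special value by tracking the $u$-dependence of the defining equations through the two-ray game, and then to prove that \emph{no} still-worse invariant divisor exists at these parameters; this is genuinely case-by-case, since it turns on precise contact orders along the invariant curves rather than on any single uniform estimate.
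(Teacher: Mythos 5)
First, a point of comparison: this paper does not prove Theorem \ref{CS_thm} at all --- it is imported verbatim from Cheltsov--Shramov \cite[Theorem 1.5]{CS} and used as a black box, so there is no internal proof to measure your proposal against. Judged on its own terms, your text is a research plan rather than a proof. Every step that carries actual content is deferred: the explicit coordinates for the $\C^*$-action obtained from the two-ray game, the weight decomposition of $H^0(V_u,-K_{V_u})$, the identification of the relevant invariant curves, the divisor $D_0$ realizing the value $4/5$, and above all the case analysis isolating $u=3/4$ and $u=2$ are all announced (``I would begin\dots'', ``I would exhibit\dots'', ``the delicate work is to pin down\dots'') but not carried out. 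Since the entire content of the theorem lies in exactly those computations, nothing has been proved.

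Two of your reductions also need justification beyond what you give. The passage from $G$-invariant sub-linear systems to single $G$-invariant effective $\Q$-divisors $D\sim_\Q-K_{V_u}$ (your opening formula for $\alpha_G$) is not a definition; it requires the solvability of $\Aut^0(V_u)=\C^*$ and a Borel fixed point argument, exactly as in Lemma \ref{solvable_lemma} of this paper. More seriously, the claim that the non-klt locus of $(V_u,\lambda D)$ is ``supported on the finitely many $\C^*$-fixed points and invariant curves'' is false as stated: a $\C^*$-action on a threefold typically has a two-parameter family of invariant curves (closures of one-dimensional orbits), so $G$-invariance alone does not localize the problem to a finite list. Cutting the possible centers down to a short explicit list of curves is precisely where the bulk of Cheltsov--Shramov's work lies --- one must first bound the anticanonical degree of a one-dimensional non-klt center (via connectedness and subadjunction, in the spirit of Corollary \ref{qlc_corollary}) and then classify the $G$-invariant rational curves of small degree on $V_u$. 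Without that step your ``localization'' collapses, and with it the proposed lower bounds on the log canonical threshold.
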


Thus, together with Tian's result, if $u\neq 3/4$ and $u\neq 2$, then $V_u$ admits 
K\"ahler-Einstein metrics. However, if $u=3/4$ or $u=2$, then the existence of 
K\"ahler-Einstein metrics of $V_u$ was not known at that time. 
The purpose of this paper is to show the existence of K\"ahler-Einstein metrics 
for $V_{3/4}$ and $V_2$. The main result is the following: 

\begin{thm}\label{main_thm}
Both $V_{3/4}$ and $V_2$ admit K\"ahler-Einstein metrics. 
\end{thm}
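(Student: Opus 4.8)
The plan is to deduce the existence of Kähler--Einstein metrics from $K$-polystability via the Yau--Tian--Donaldson correspondence, which for Fano manifolds is now a theorem: a Fano manifold carries a Kähler--Einstein metric if and only if it is $K$-polystable. Since $\Aut(V_u)=G$ is reductive, by the equivariant reduction of $K$-stability it suffices to verify $G$-equivariant $K$-polystability, and the latter may be tested on $G$-invariant special test configurations. By the valuative description of $K$-stability, each such configuration is encoded by a $G$-invariant prime divisor $E$ over $V_u$ through $\beta(E)=A_{V_u}(E)-S(E)$, where $A_{V_u}(E)$ is the log discrepancy and $S(E)=\frac{1}{(-K_{V_u})^{\cdot 3}}\int_0^{\infty}\vol(-K_{V_u}-tE)\,dt$ is the expected order of vanishing, computed on a birational model carrying $E$. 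Thus the problem reduces to proving $\beta(E)\ge 0$ for every $G$-invariant $E$, with equality only for divisors induced by the one-parameter subgroup of $\Aut^0(V_u)=\C^*$; equivalently, that the $G$-equivariant stability threshold satisfies $\delta_G(V_u)\ge 1$ with the minimizers accounted for by the torus action. The $\Z/2\Z$-factor of $G$ is useful here, since it forces the relevant Donaldson--Futaki invariants along the torus to vanish.

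For $V_{3/4}$ I would first exploit Theorem \ref{CS_thm}. The inequality $\delta_G\ge\frac{4}{3}\,\alpha_G$ relating the $G$-equivariant stability threshold to the $\alpha$-invariant gives $\delta_G(V_{3/4})\ge\frac{4}{3}\cdot\frac{3}{4}=1$, so $V_{3/4}$ is at least $G$-equivariantly $K$-semistable. The remaining work is the equality analysis: I would enumerate the $G$-invariant divisorial valuations $v$ that could achieve $A_{V_{3/4}}(v)=S(v)$, using the explicit $\C^*$-action together with the description of $V_{3/4}$ via the two-ray game from the blowup of a quadric threefold along a sextic rational curve to pin down the invariant surfaces and curves. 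One then checks that any such minimizer is induced by the $\C^*$-subgroup, hence corresponds to a product test configuration of vanishing Donaldson--Futaki invariant, which is compatible with polystability. This yields $K$-polystability, and hence a Kähler--Einstein metric, for $V_{3/4}$.

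For $V_2$ the same bound only gives $\delta_G(V_2)\ge\frac{4}{3}\cdot\frac{2}{3}=\frac{8}{9}<1$ and is therefore useless; this case is the main obstacle. Here I would estimate $\delta_G(V_2)$ directly by the Abban--Zhuang method of flags and refinements. Concretely, one chooses a $G$-invariant divisor $S\subset V_2$ and a $G$-invariant curve $C\subset S$ adapted to the $\C^*$-action, and bounds $A/S$ along the resulting flag in terms of the refined expected-vanishing functions on $S$ and on $C$; the volume computations are carried out on the birational models produced by the two-ray game, whose Mori chamber decomposition is explicit. The delicate point is to identify the $G$-invariant divisor responsible for the small value $\alpha_G=2/3$ and to verify that, although its log canonical threshold is low, its \emph{averaged} invariant still satisfies $A_{V_2}(E)\ge S(E)$. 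Combining the flag estimates over all $G$-invariant valuations should give $\delta_G(V_2)\ge 1$, and the same equality analysis as above --- showing that the only valuations with $A=S$ come from $\Aut^0(V_2)=\C^*$ --- promotes semistability to $K$-polystability.

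The hardest step is the lower bound $\delta_G(V_2)\ge 1$: the failure of the $\alpha$-criterion forces a hands-on Abban--Zhuang computation on the explicit models, and the presence of the positive-dimensional automorphism group means that proving the sharp inequality is not by itself enough --- one must separately control the equality case and certify that every minimizing configuration is a product coming from the $\C^*$-action, which is exactly what upgrades $K$-semistability to the $K$-polystability needed for the existence of a Kähler--Einstein metric.
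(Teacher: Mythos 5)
Your overall framework---the Yau--Tian--Donaldson correspondence reduced, via reductivity of $G$ and the valuative criterion, to checking $A(F)>S(F)$ for $G$-invariant prime divisors $F$ over $V_u$---is exactly the paper's starting point (Proposition \ref{G_proposition}). For $V_{3/4}$ your argument is essentially correct but heavier than necessary: the paper simply invokes the criterion $\alpha_G(X)\geq\frac{n}{n+1}$ (Proposition \ref{G_proposition}\eqref{G_proposition2}), whose borderline case is already handled by the general theorem that a valuation with $A(F)\leq S(F)$ under this hypothesis forces $X\simeq\pr^n$; no enumeration of potential minimizers is needed. Note also that the equality/polystability analysis you repeatedly invoke is not actually required in this setting: because the $\Z/2\Z$-factor of $G$ inverts the torus, no nontrivial one-parameter subgroup of $\C^*$ gives a $G$-invariant valuation, so the criterion to be verified is the \emph{strict} inequality $A(F)>S(F)$ for every $G$-invariant $F$, full stop.

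For $V_2$, however, your proposal has a genuine gap: it is a statement of strategy (``an Abban--Zhuang computation on a suitable $G$-invariant flag should give $\delta_G\geq 1$'') with none of the decisive steps carried out, and it omits the two inputs that actually make the proof work. First, one must control \emph{where} a destabilizing $G$-invariant valuation can be centered. The paper does this by combining $\alpha_{G,\eta}(X)\leq 3/4$ with a subadjunction theorem for qlc strata of quasi-log schemes (Theorem \ref{subadjunction_thm}, Corollary \ref{qlc_corollary}): the center $C$ must be a smooth rational curve with $(-K_X\cdot C)\leq\frac{2}{1-\alpha}<10$ embedded as a rational normal curve, and the classification of $G$-invariant curves in \cite{CS} then forces $C=\sC_4$. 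Your sketch never rules out points (done via \cite[Lemma 2.23]{CS}), divisors on $X$ (done via \cite[Corollary 9.3]{div_stability}), or high-degree or singular invariant curves. Second, at the one remaining center one needs a quantitative bound on $S(F)$. The paper does not use Abban--Zhuang at all; it uses the auxiliary surface $T'_{15}\sim-K_X$ with $\lct_\eta(X;T'_{15})=2/3$ but $\lct_\eta(X;D')\geq 1$ for all $D'$ avoiding $T'_{15}$, fed into the new volume estimate of Propositions \ref{vol_proposition} and \ref{pltK_proposition}, yielding $S(F)\leq\frac{7}{8}A(F)$. This ``averaging over the single bad divisor'' is precisely the point you flag as delicate, but you give no mechanism for it; without identifying $\sC_4$, $T'_{15}$, and an estimate of this kind, the claim $\delta_G(V_2)\geq 1$ remains unproven.
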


Thus, together with Matsushima's obstruction \cite{matsushima}, we gave a complete 
answer for the existence of K\"ahler-Einstein metrics of nonsingular Fano threefolds $X$ 
with $\Pic(X)=\Z[-K_X]$ and $\Aut(X)$ infinite; 
such $X$ admits K\"ahler-Einstein metrics if and only if $X\not\simeq V^{\aaa}$.

The main technique to prove Theorem \ref{main_thm} is the ``$G$-valuative criterion" 
(see \S \ref{Fano_section}, \cite{FANO} and \cite{zhuang}), 
which is a $G$-equivariant version of 
\cite[Theorem 3.7]{li} and \cite[Theorem 1.6]{vst}. Moreover, we need deep analyses 
\cite{CS} of 
$G$-invariant curves on $V_u$ which might be possible destabilizing centers of 
$G$-invariant prime divisors over $V_u$. 
Technically, the theory of quasi-log schemes \cite{fujino} and the careful analysis 
of the volume functions (see \S \ref{volume_section}) play important roles in order to 
show Theorem \ref{main_thm}. Especially, we use 
a subadjunction-type result for projective qlc strata on quasi-log schemes \cite{fujino_hyp} 
(see Theorem \ref{subadjunction_thm}). 
We expect that 
these techniques will be applied for many 
other Fano varieties (cf. \cite{FANO}).

\begin{ack}
The author thank Carolina Araujo, Ana-Maria Castravet, Ivan Cheltsov, Osamu Fujino, 
Anne-Sophie Kaloghiros, Akihiro Kanemitsu, Jesus Martinez-Garcia, Constantin Shramov, 
Hendrick S\"uss and Nivedita Viswanathan 
for many discussions. 
This work started while the author participated the AIM workshop named 
``K-stability and related topics" on January 2020. 
The author thanks the staffs for the stimulating environment. 
This work was supported by JSPS KAKENHI Grant Number 18K13388.
\end{ack}

\section{Log Fano pairs}\label{Fano_section}

In this section, we recall the definition of $\alpha$-invariant for log Fano pairs and 
see the relationship between $\alpha$-invariant and the existence of 
K\"ahler-Einstein metrics. For the minimal model program, we refer the readers to 
\cite{KoMo}. 

\begin{definition}\label{fano_definition}
A \emph{log Fano pair} $(X, \Delta)$ is a pair of a complex normal projective variety 
$X$ and an effective $\Q$-Weil divisor $\Delta$ on $X$ such that 
the pair $(X, \Delta)$ is klt 
and $-(K_X+\Delta)$ is an ample $\Q$-divisor. If $X$ is nonsingular and $\Delta=0$, 
then $X$ is said to be a \emph{Fano manifold}. 
\end{definition}

\begin{definition}[{$\alpha$-invariant (see \cite{Dem08} 
for example)}]\label{alpha_definition}
Let $(X, \Delta)$ be a log Fano pair and let $\Gamma\subset\Aut(X, \Delta)$ be an 
algebraic subgroup. 
\begin{enumerate}
\renewcommand{\theenumi}{\arabic{enumi}}
\renewcommand{\labelenumi}{(\theenumi)}
\item\label{alpha_definition1}
The \emph{$\Gamma$-invariant $\alpha$-invariant $\alpha_\Gamma(X, \Delta)$ 
of $(X, \Delta)$} is defined as the supremum of $\alpha\in\Q_{>0}$ such that 
the pair $\left(X, \Delta+\frac{\alpha}{m}\sD\right)$ is lc for any $m\in\Z_{>0}$ with 
$-m(K_X+\Delta)$ Cartier and for any nonempty $\Gamma$-invariant 
sub-linear system $\sD\subset|-m(K_X+\Delta)|$.
\item\label{alpha_definition2}
For any scheme-theoretic point $\eta\in X$, we define $\alpha_{\Gamma,\eta}(X,\Delta)$
as the supremum of $\alpha\in\Q_{>0}$ such that 
the pair $\left(X, \Delta+\frac{\alpha}{m}\sD\right)$ is 
lc at $\eta$ for any $m\in\Z_{>0}$ with 
$-m(K_X+\Delta)$ Cartier and for any nonempty $\Gamma$-invariant 
sub-linear system $\sD\subset|-m(K_X+\Delta)|$. 
\item\label{alpha_definition3}
If $\Delta=0$, then we simply write $\alpha_\Gamma(X)$ and $\alpha_{\Gamma,\eta}(X)$ 
in place of $\alpha_\Gamma(X,0)$ and $\alpha_{\Gamma,\eta}(X,0)$, respectively. 
\end{enumerate}
\end{definition}

\begin{lemma}\label{solvable_lemma}
Let $(X, \Delta)$ be a log Fano pair, $\Gamma\subset\Aut(X, \Delta)$ be an algebraic 
subgroup, and $\eta\in X$ be a scheme-theoretic point. 
Assume that the identity component $\Gamma_0$ of $\Gamma$ is solvable. 
\begin{enumerate}
\renewcommand{\theenumi}{\arabic{enumi}}
\renewcommand{\labelenumi}{(\theenumi)}
\item\label{solvable_lemma1}
The value $\alpha_\Gamma(X, \Delta)$ is equal to the supremum of $\alpha\in\Q_{>0}$ 
such that the pair $(X, \Delta+\alpha D)$ is lc for any effective $\Gamma$-invariant 
$\Q$-divisor $D\sim_\Q -(K_X+\Delta)$.
\item\label{solvable_lemma2}
The value $\alpha_{\Gamma,\eta}(X, \Delta)$ is equal to the supremum 
of $\alpha\in\Q_{>0}$ such that the pair $(X, \Delta+\alpha D)$ is lc at $\eta$ 
for any effective $\Gamma$-invariant 
$\Q$-divisor $D\sim_\Q -(K_X+\Delta)$.
\end{enumerate}
\end{lemma}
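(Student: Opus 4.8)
The plan is to compare the two suprema directly. Since $\Gamma$ preserves $\Delta$ and $K_X$ is canonical, the class $-(K_X+\Delta)$ is $\Gamma$-invariant, so for each $m$ with $-m(K_X+\Delta)$ Cartier the group $\Gamma$ acts on $|-m(K_X+\Delta)|=\pr(V_m)$, where $V_m:=H^0(X,\sO_X(-m(K_X+\Delta)))$; a $\Gamma$-invariant sub-linear system $\sD$ is then a $\Gamma$-invariant linear subspace $\pr(W)\subset\pr(V_m)$. Writing $A_E$ for the log discrepancy $A_{(X,\Delta)}(E)$ of a prime divisor $E$ over $X$ and $\ord_E(\sD):=\min_{D'\in\sD}\ord_E(D')$, one has $\lct(X,\Delta;\sD)=\min_E A_E/\ord_E(\sD)$, and the condition that $(X,\Delta+\tfrac{\alpha}{m}\sD)$ be lc is equivalent to $\alpha\le m\cdot\lct(X,\Delta;\sD)$. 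Thus $\alpha_\Gamma(X,\Delta)=\inf_{m,\sD} m\cdot\lct(X,\Delta;\sD)$, whereas the right-hand side of \eqref{solvable_lemma1} equals $\inf_D\lct(X,\Delta;D)$ over effective $\Gamma$-invariant $D\sim_\Q-(K_X+\Delta)$. I will prove these two quantities coincide; part \eqref{solvable_lemma2} then follows by running the identical argument with $\min_E$ restricted to those $E$ whose center on $X$ passes through $\eta$.

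For the inequality $\alpha_\Gamma(X,\Delta)\le\inf_D\lct(X,\Delta;D)$, I would take any effective $\Gamma$-invariant $D\sim_\Q-(K_X+\Delta)$, choose $m$ so that $mD$ is an integral member of $|-m(K_X+\Delta)|$, and regard the single point $[mD]$ as a $0$-dimensional $\Gamma$-invariant sub-linear system $\sD_0$ (it is $\Gamma$-invariant precisely because $D$ is). Since $\ord_E(\sD_0)=m\,\ord_E(D)$ for every $E$, we get $m\cdot\lct(X,\Delta;\sD_0)=\lct(X,\Delta;D)$, so $\alpha_\Gamma(X,\Delta)\le\lct(X,\Delta;D)$; taking the infimum over $D$ gives the claim.

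The substantive direction is the reverse inequality, and this is where solvability enters. Given $m$ and a $\Gamma$-invariant sub-linear system $\sD=\pr(W)$, I would like a $\Gamma$-invariant member of $\sD$, but the finite quotient $\Gamma/\Gamma_0$ may obstruct the existence of a single such member. Instead, since $\Gamma_0$ is connected solvable and $\pr(W)$ is a non-empty complete variety, the Borel fixed point theorem yields a $\Gamma_0$-fixed point, i.e. a $\Gamma_0$-invariant effective divisor $D_0\in\sD$. As $\Gamma_0$ is normal of finite index, the $\Gamma$-orbit $\{D_0,\dots,D_k\}$ of $D_0$ is finite and contained in $\sD$, so $D:=\tfrac{1}{m(k+1)}\sum_{i=0}^{k}D_i$ is an effective $\Gamma$-invariant $\Q$-divisor with $D\sim_\Q-(K_X+\Delta)$. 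The key point is that each $D_i$ lies in $\sD$, whence $\ord_E(D_i)\ge\ord_E(\sD)$ for every $E$ and therefore $\ord_E(D)\ge\tfrac{1}{m}\ord_E(\sD)$; comparing the defining formulas gives $\lct(X,\Delta;D)\le m\cdot\lct(X,\Delta;\sD)$. Hence $\inf_{D'}\lct(X,\Delta;D')\le\lct(X,\Delta;D)\le m\cdot\lct(X,\Delta;\sD)$, and taking the infimum over all $m$ and $\sD$ yields $\inf_{D'}\lct(X,\Delta;D')\le\alpha_\Gamma(X,\Delta)$, completing the proof.

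The main obstacle is exactly this passage from the linear system to a single $\Gamma$-invariant $\Q$-divisor: Borel's theorem only produces a $\Gamma_0$-invariant member, and the finite group $\Gamma/\Gamma_0$ need not fix any member, so the averaging step seems unavoidable. I would also note that the argument is insensitive to whether ``$(X,\Delta+\tfrac{\alpha}{m}\sD)$ is lc'' is interpreted via the base ideal of $\sD$ or via a general member, since in either case the relevant valuative quantity is $\ord_E(\sD)=\min_{D'\in\sD}\ord_E(D')$, and only the inequality $\ord_E(D_i)\ge\ord_E(\sD)$ is used.
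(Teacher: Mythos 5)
Your proof is correct and takes essentially the same approach as the paper: the easy direction via one-element linear systems $\{mD\}$, and the substantive direction via the Borel fixed point theorem applied to $\Gamma_0$ acting on $\pr(W)$, followed by a finite averaging step to produce a $\Gamma$-invariant $\Q$-divisor (the paper also implicitly uses the base-ideal convention for lc-ness of $(X,\Delta+\tfrac{\alpha}{m}\sD)$, matching your main argument). The only cosmetic difference is in the averaging: the paper invokes Borel--Serre to get a finite subgroup $\Gamma_1\subset\Gamma$ meeting every connected component and sums $h(D_0)$ over $h\in\Gamma_1$, whereas you average over the finite $\Gamma$-orbit of $D_0$ (finite because its stabilizer contains the finite-index subgroup $\Gamma_0$), which avoids that citation.
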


\begin{proof}
We only prove \eqref{solvable_lemma2}. By \cite[Lemme 5.11]{borel-serre}, there is 
a finite algebraic subgroup $\Gamma_1\subset\Gamma$ such that 
$\Gamma_1$ meets every connected component of $\Gamma$. 
Set $d:=\#\Gamma_1$. Fix $\alpha\in\Q_{>0}$. 

Assume that $\left(X, \Delta+\frac{\alpha}{m}\sD\right)$ is lc at $\eta$ for any $m$ and 
$\sD\subset|-m(K_X+\Delta)|$. Then, for any effective $\Gamma$-invariant $\Q$-divisor 
$D\sim_\Q-(K_X+\Delta)$, since $\{mD\}\subset|-m(K_X+\Delta)|$ is a 
$\Gamma$-invariant sub-linear system for $m$ sufficiently divisible, we know that 
the pair $\left(X, \Delta+\frac{\alpha}{m}\{mD\}\right)$ is lc at $\eta$, i.e., the pair 
$(X, \Delta+\alpha D)$ is lc at $\eta$. 

Conversely, assume that $(X, \Delta+\alpha D)$ is lc at $\eta$ for any effective 
$\Gamma$-invariant $\Q$-divisor $D\sim_\Q-(K_X+\Delta)$. Take any 
$\Gamma$-invariant sub-linear system $\sD\subset|-m(K_X+\Delta)|$. Since 
$\Gamma_0$ is connected and solvable, there exists a $\Gamma_0$-invariant divisor 
$D_0\in \sD$ by the Borel fixed point theorem \cite[\S 21.2]{humphreys}. Let us set 
\[
\tilde{D}:=\sum_{h\in\Gamma_1}h(D_0)\in d\sD. 
\]
Since $h(D_0)\in\sD$ is $\Gamma_0$-invariant for any $h\in\Gamma_1$, 
the divisor $\tilde{D}\in d\sD$ is $\Gamma$-invariant. Set 
$D:=\frac{1}{md}\tilde{D}\sim_\Q-(K_X+\Delta)$. 
Since $(X, \Delta+\alpha D)$ is lc at $\eta$, 
the pair $\left(X, \Delta+\frac{\alpha}{md}(d\sD)\right)$ 
is also lc at $\eta$. This is equivalent to 
the pair $\left(X, \Delta+\frac{\alpha}{m}\sD\right)$ being lc at $\eta$. 
\end{proof}

We recall the words on \emph{K-stability} of log Fano pairs. The original notion of 
K-stability was introduced by Tian \cite{tian} and Donaldson \cite{don} by using the 
languages of \emph{test configurations}. In this paper, we only treat its simplification 
due to Li \cite{li} and the author \cite{vst}. 

\begin{definition}\label{vst_definition}
Let $(X, \Delta)$ be an $n$-dimensional log Fano pair and let $F$ be a prime divisor 
over $X$ obtained by a log resolution $\pi\colon\tilde{X}\to X$ of $(X, \Delta)$ 
(that is, $F$ is a prime divisor on $\tilde{X}$). 
\begin{enumerate}
\renewcommand{\theenumi}{\arabic{enumi}}
\renewcommand{\labelenumi}{(\theenumi)}
\item\label{vst_definition1}
Let $A_{X,\Delta}(F)$ be the log discrepancy of $(X, \Delta)$ along $F$, that is, 
$1$ plus the coefficient of $K_{\tilde{X}}-\pi^*(K_X+\Delta)$ along $F$. 
\item\label{vst_definition2}
For any effective $\Q$-Cartier $\Q$-divisor $D$, let 
$\ord_F D\in\Q_{\geq 0}$ be the coefficient of $\pi^*D$ along $F$. 
For any $m\in\Z_{\geq 0}$ with $-m(K_X+\Delta)$ Cartier and for any $j\in\R_{\geq 0}$, 
let 
\[
H^0(X, -m(K_X+\Delta)-jF)\subset H^0(X, -m(K_X+\Delta))
\]
be the sub-vector space corresponds to the sub-linear system 
$|-m(K_X+\Delta)-jF|\subset|-m(K_X+\Delta)|$ 
consisting all $D\in|-m(K_X+\Delta)|$ with $\ord_F D\geq j$.
\item\label{vst_definition3}
For any $x\in\R_{\geq 0}$, let us set 
\[
\vol(-(K_X+\Delta)-xF):=\lim_{m\to\infty}\frac{
\dim H^0(X, -m(K_X+\Delta)-mxF)}{m^n/n!},
\]
where $m$ runs through all positive integers with $-m(K_X+\Delta)$ Cartier 
(the limit exists by \cite{L1,L2}). 
Obviously, $\vol(-(K_X+\Delta)-0\cdot F)=(-(K_X+\Delta))^{\cdot n}$ holds. 
It follows from the definition that 
\[
\vol(-(K_X+\Delta)-xF)=\vol_{\tilde{X}}(-\pi^*(K_X+\Delta)-xF).
\]
In particular, by \cite{L1, L2}, the function 
$\vol(-(K_X+\Delta)-xF)$ is a non-increasing and continuous function over $x\in[0,\infty)$. 
Moreover, if $x\gg 0$, then $\vol(-(K_X+\Delta)-xF)=0$ holds. Let us set 
\[
\tau_{X, \Delta}(F):=\sup\{\tau\in\R_{> 0}\,\,|\,\,\vol(-(K_X+\Delta)-\tau F)>0\}.
\]
\item\label{vst_definition4}
Let us set 
\[
S_{X, \Delta}(F):=\frac{1}{(-(K_X+\Delta))^{\cdot n}}\int_0^\infty\vol(-(K_X+\Delta)-xF)dx.
\]
\end{enumerate}
The above definitions do not depend on the choice of $\pi$. 
We often write $A(F)$, $\tau(F)$, $S(F)$ in place of $A_{X,\Delta}(F)$, 
$\tau_{X, \Delta}(F)$, $S_{X, \Delta}(F)$, just for simplicity. 
\end{definition}

The following lemma is well-known: 

\begin{lemma}\label{alpha-tau_lemma}
Let $(X, \Delta)$ be a log Fano pair, let $\Gamma\subset\Aut(X, \Delta)$ be an algebraic 
subgroup, and let $\eta\in X$ be a scheme-theoretic point. For any $\Gamma$-invariant 
prime divisor $F$ over $X$ with $\eta\in c_X(F)$, we have 
\[
\frac{A(F)}{\tau(F)}\geq \alpha_{\Gamma,\eta}(X,\Delta),
\]
where $c_X(F)$ is the center of $F$ on $X$. 
\end{lemma}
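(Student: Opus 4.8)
The plan is to relate the $\alpha$-invariant to the quantities $A(F)$ and $\tau(F)$ by producing, from a $\Gamma$-invariant prime divisor $F$ over $X$, a suitable $\Gamma$-invariant effective $\Q$-divisor whose log canonical threshold controls both sides. Since we want a statement about $\alpha_{\Gamma,\eta}$, and since the identity component of a relevant automorphism group in our applications is solvable, I would first reduce to the formulation in Lemma \ref{solvable_lemma}\eqref{solvable_lemma2}: it suffices to work with effective $\Gamma$-invariant $\Q$-divisors $D\sim_\Q-(K_X+\Delta)$ rather than with sub-linear systems. (If one does not wish to assume solvability, the same argument goes through directly at the level of the sub-linear systems $|-m(K_X+\Delta)-jF|$ defining the section spaces in Definition \ref{vst_definition}\eqref{vst_definition2}, averaging over a finite set meeting every component as in the proof of Lemma \ref{solvable_lemma}.)

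The key step is the following observation about $\tau(F)$. For $x<\tau(F)$ we have $\vol(-(K_X+\Delta)-xF)>0$, which means that for $m$ sufficiently divisible the linear system $|-m(K_X+\Delta)-\lceil mx\rceil F|$ is nonempty; that is, there exists an effective divisor $D_m\in|-m(K_X+\Delta)|$ with $\ord_F D_m\geq mx$. Next I would make this choice $\Gamma$-invariant: since $F$ is $\Gamma$-invariant, the vanishing condition $\ord_F(\cdot)\geq mx$ cuts out a $\Gamma$-invariant sub-linear system, so after averaging (via $\tilde D:=\sum_{h\in\Gamma_1}h(D_0)$ exactly as in the proof of Lemma \ref{solvable_lemma}, using a $\Gamma_0$-fixed member guaranteed by the Borel fixed point theorem) we obtain a $\Gamma$-invariant effective $\Q$-divisor
\[
D\sim_\Q-(K_X+\Delta),\qquad \ord_F D\geq x.
\]

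The final step is to run the definition of $\alpha_{\Gamma,\eta}$ against this $D$. For any rational $\alpha<\alpha_{\Gamma,\eta}(X,\Delta)$ the pair $(X,\Delta+\alpha D)$ is lc at $\eta$, hence lc along the center $c_X(F)\ni\eta$, and therefore lc along $F$ itself. Computing log discrepancies on a log resolution $\pi\colon\tilde X\to X$ extracting $F$, the lc condition along $F$ gives
\[
A_{X,\Delta}(F)-\alpha\cdot\ord_F D\geq 0,
\]
so $A(F)\geq \alpha\,\ord_F D\geq \alpha x$. Letting $x\uparrow\tau(F)$ and $\alpha\uparrow\alpha_{\Gamma,\eta}(X,\Delta)$ yields $A(F)\geq\alpha_{\Gamma,\eta}(X,\Delta)\cdot\tau(F)$, which is the claimed inequality after dividing by $\tau(F)$.

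I expect the only genuine subtlety to be the $\Gamma$-invariant upgrade of the near-maximal-order divisor: one must check that taking $x$ strictly below $\tau(F)$ (so the system is genuinely nonempty) and then symmetrizing over $\Gamma_1$ does not destroy the lower bound $\ord_F D\geq x$, which holds because $\ord_F$ is $\Gamma$-invariant and each translate $h(D_0)$ already satisfies $\ord_F h(D_0)\geq mx$. The passage to the limit $x\to\tau(F)$ is harmless since the inequality $A(F)\geq\alpha x$ is closed. Everything else is the standard unwinding of the definitions of $A(F)$, $\tau(F)$, and $\alpha_{\Gamma,\eta}$.
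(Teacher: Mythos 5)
Your core computation is the same as the paper's, but you reach it by an unnecessary detour that actually weakens the result. The paper's proof is precisely your parenthetical fallback: for $\tau=\frac{j}{m}\in(0,\tau(F))\cap\Q$ the sub-linear system $|-m(K_X+\Delta)-jF|$ is nonempty (since $\vol(-(K_X+\Delta)-\tau F)>0$) and is automatically $\Gamma$-invariant because $F$ is $\Gamma$-invariant; since Definition \ref{alpha_definition}\eqref{alpha_definition2} of $\alpha_{\Gamma,\eta}$ is stated directly in terms of $\Gamma$-invariant sub-linear systems, one immediately gets
\[
\alpha_{\Gamma,\eta}(X,\Delta)\leq\lct_\eta\left(X,\Delta;\tfrac{1}{m}|-m(K_X+\Delta)-jF|\right)\leq\frac{A(F)}{\ord_F\left(\tfrac{1}{m}|-m(K_X+\Delta)-jF|\right)}\leq\frac{A(F)}{\tau},
\]
with no averaging and no fixed-point argument at all. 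By contrast, your primary route converts the linear system into a single $\Gamma$-invariant effective $\Q$-divisor via the Borel fixed point theorem plus symmetrization over $\Gamma_1$, and this step genuinely requires the identity component $\Gamma_0$ to be solvable (that is the hypothesis of Lemma \ref{solvable_lemma}, and of the Borel fixed point theorem). The lemma as stated assumes only that $\Gamma$ is an algebraic subgroup, and it is invoked in the sketch of Proposition \ref{G_proposition}\eqref{G_proposition2} for a reductive $\Gamma$, where $\Gamma_0$ need not be solvable; so as written your main argument does not prove the full statement. The fix is simply to promote your fallback to the main proof and delete the averaging remark from it (the averaging is not needed there, since the relevant system is already $\Gamma$-invariant). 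The rest of your argument --- that lc at $\eta$ together with $\eta\in c_X(F)$ forces $A(F)-\alpha\,\ord_F\geq 0$, and the closedness of the inequality under $x\uparrow\tau(F)$ --- is correct and matches the paper.
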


\begin{proof}
Take any $\tau=\frac{j}{m}\in(0,\tau(F))\cap\Q$. Then the sub-linear system 
\[
|-m(K_X+\Delta)-jF|\subset|-m(K_X+\Delta)|
\]
is nonempty, $\Gamma$-invariant, and vanishes along $F$ at least $j$ times. 
Thus we have 
\begin{eqnarray*}
\alpha_{\Gamma,\eta}(X, \Delta)&\leq& 
\lct_\eta\left(X,\Delta;\,\, \frac{1}{m}|-m(K_X+\Delta)-jF|\right)\\
&\leq&\frac{A_{X, \Delta}(F)}{\ord_F\left(\frac{1}{m}|-m(K_X+\Delta)-jF|\right)}
\leq\frac{A_{X, \Delta}(F)}{\tau}, 
\end{eqnarray*}
where $\lct_\eta$ is the log canonical threshold at $\eta$. 
\end{proof}

We see a $G$-invariant version of \cite{li, vst} and \cite{alpha}. See also \cite{zhuang} 
for more general frameworks.

\begin{proposition}[{see \cite{FANO}}]\label{G_proposition}
Let $X$ be an $n$-dimensional Fano manifold and let $\Gamma\subset\Aut(X)$ 
be a reductive subgroup. 
\begin{enumerate}
\renewcommand{\theenumi}{\arabic{enumi}}
\renewcommand{\labelenumi}{(\theenumi)}
\item\label{G_proposition1} (see also \cite[Corollary 4.13]{zhuang})
Assume that $A(F)>S(F)$ holds for any $\Gamma$-invariant prime divisor over $X$ with 
the $\C$-algebra 
\[
\bigoplus_{m,j\in\Z_{\geq 0}}H^0(X, -mK_X-jF)
\]
finitely generated. Then $X$ admits K\"ahler-Einstein metrics. 
\item\label{G_proposition2} (see also \cite{Tia87} and \cite[Theorem 1.3]{alpha})
If $\alpha_\Gamma(X)\geq\frac{n}{n+1}$, then $X$ admits K\"ahler-Einstein metrics. 
\end{enumerate}
\end{proposition}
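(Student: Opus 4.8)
The plan is to regard part (\ref{G_proposition1}) as the main statement and to deduce part (\ref{G_proposition2}) from it. For (\ref{G_proposition1}), I would first invoke the non-equivariant valuative criterion of Li \cite{li} and the author \cite{vst}. For a prime divisor $F$ over $X$ whose section ring $\bigoplus_{m,j\in\Z_{\geq 0}}H^0(X,-mK_X-jF)$ is finitely generated, the Rees-type construction on the filtration by the spaces $H^0(X,-mK_X-jF)$ (Definition \ref{vst_definition}) produces a special test configuration $(\sX,\sL)$ of $(X,-K_X)$, and Fujita's formula shows that its normalized Donaldson--Futaki invariant coincides, up to a positive constant, with $\beta(F):=A(F)-S(F)$. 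Hence the hypothesis that $A(F)>S(F)$ for all such $\Gamma$-invariant $F$ is precisely the statement that $\DF>0$ on every nontrivial $\Gamma$-equivariant special test configuration of $(X,-K_X)$.

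Next I would use equivariant K-stability. Since $\Gamma$ is reductive, the equivariant valuative criterion (Zhuang \cite{zhuang}, \cite{FANO}; see also Datar--Sz\'ekelyhidi on the analytic side) reduces the K-polystability of $X$ to testing $\DF$ on $\Gamma$-equivariant special test configurations, equivalently on $\Gamma$-invariant divisors $F$ with finitely generated section ring. The only equality case $\DF=0$ that must be allowed corresponds to product configurations induced by one-parameter subgroups $\lambda$ of $\Gamma$ whose associated valuation is $\Gamma$-invariant; for the relevant groups no such nontrivial $\lambda$ exists, as for $\Gamma=\C^*\rtimes(\Z/2\Z)$ the factor $\Z/2\Z$ acts on the cocharacter lattice of $\C^*$ by $-1$, so that no nonzero cocharacter is $\Gamma$-invariant. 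Therefore the strict inequality $A(F)>S(F)$ on all $\Gamma$-invariant $F$ is exactly $\Gamma$-equivariant K-polystability, and the solution of the Yau--Tian--Donaldson conjecture in the polystable case (Chen--Donaldson--Sun, and Tian) yields a K\"ahler--Einstein metric on $X$, proving (\ref{G_proposition1}).

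To obtain (\ref{G_proposition2}) I would deduce $A(F)\geq S(F)$ for every $\Gamma$-invariant $F$ from the hypothesis on $\alpha_\Gamma(X)$. Lemma \ref{alpha-tau_lemma} already gives $A(F)\geq\alpha_\Gamma(X)\,\tau(F)$, and refining it by means of $m$-basis-type divisors together with the concavity of $x\mapsto\vol(-K_X-xF)^{1/n}$ upgrades this to $A(F)\geq\frac{n+1}{n}\,\alpha_\Gamma(X)\,S(F)$ (the equivariant form of the Fujita--Odaka estimate, which is the content of \cite[Theorem 1.3]{alpha}). Under $\alpha_\Gamma(X)\geq\frac{n}{n+1}$ this gives $A(F)\geq S(F)$ for all $\Gamma$-invariant $F$, and the existence of a K\"ahler--Einstein metric then follows from the $\alpha$-invariant criterion of \cite{Tia87} and \cite[Theorem 1.3]{alpha}; when $\alpha_\Gamma(X)>\frac{n}{n+1}$ the strict inequality $A(F)>S(F)$ holds and (\ref{G_proposition1}) applies directly.

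The main obstacle, I expect, is the equivariant reduction underlying (\ref{G_proposition1}): proving that for a reductive $\Gamma$ it suffices to test the sign of $A(F)-S(F)$ on $\Gamma$-invariant divisors with finitely generated section ring, and that the polystable equality case is correctly accounted for by the $\Gamma$-invariance requirement. This rests on the equivariant valuative criterion of \cite{zhuang} and \cite{FANO} together with the analytic input of the Yau--Tian--Donaldson correspondence for polystable Fano manifolds. Once these are granted, the passage from the divisorial inequalities to the existence of a K\"ahler--Einstein metric, and the reduction of (\ref{G_proposition2}) to (\ref{G_proposition1}), are essentially formal.
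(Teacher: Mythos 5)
Your part (\ref{G_proposition1}) is essentially the paper's argument: through the correspondence of \cite[Theorem 5.1]{vst}, every nontrivial $\Gamma$-equivariant special degeneration of $X$ yields a $\Gamma$-invariant prime divisor $F$ with finitely generated algebra whose $A(F)-S(F)$ has the same sign as the Donaldson--Futaki invariant, so your hypothesis forces $\DF>0$ on all such degenerations, and the equivariant Yau--Tian--Donaldson theorem for reductive groups (the paper invokes \cite[Theorem 1]{DS}; your route via \cite{zhuang} plus Chen--Donaldson--Sun also works) produces the metric. One caveat: your discussion of the equality case $\DF=0$ is superfluous and misleadingly special. Strict positivity on all nontrivial $\Gamma$-equivariant special degenerations already implies $\Gamma$-equivariant K-polystability, so nothing needs to be ``allowed''; and your argument that no nontrivial $\Gamma$-invariant cocharacter exists applies only to $\Gamma=\C^*\rtimes(\Z/2\Z)$, whereas the proposition is stated for an arbitrary reductive $\Gamma$ (for a torus such cocharacters do exist; in that situation the hypothesis is simply unsatisfiable and the implication holds vacuously).

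The genuine gap is in part (\ref{G_proposition2}), in the equality case $\alpha_\Gamma(X)=\frac{n}{n+1}$. Your estimate $A(F)\geq\alpha_\Gamma(X)\,\tau(F)\geq\frac{n+1}{n}\alpha_\Gamma(X)\,S(F)$ (Lemma \ref{alpha-tau_lemma} combined with \cite[Proposition 2.1]{pltK}) then only gives the weak inequality $A(F)\geq S(F)$, i.e.\ $\Gamma$-equivariant K-semistability, from which the existence of a K\"ahler--Einstein metric does not follow; and neither reference you invoke closes this case: Tian's criterion \cite{Tia87} requires the strict inequality $\alpha_\Gamma(X)>\frac{n}{n+1}$, while \cite[Theorem 1.3]{alpha} concerns the non-equivariant invariant $\alpha(X)$, which can lie below $\frac{n}{n+1}$ even when $\alpha_\Gamma(X)\geq\frac{n}{n+1}$ --- this is the entire reason for working equivariantly. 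The paper resolves exactly this case by a rigidity argument: if some $\Gamma$-invariant $F$ with finitely generated algebra satisfies $A(F)\leq S(F)$, then Lemma \ref{alpha-tau_lemma} gives $A(F)\geq\frac{n}{n+1}\tau(F)$, and \cite[Theorem 4.1]{alpha} then forces $X\simeq\pr^n$, which carries the Fubini--Study metric; otherwise part (\ref{G_proposition1}) applies. This dichotomy is the missing idea, and it is not academic: the equality case is precisely how the paper treats $V_{3/4}$, where $\alpha_G(V_{3/4})=3/4=\frac{n}{n+1}$, so as written your proof of (\ref{G_proposition2}) would not suffice for the paper's application.
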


For the complete proof, see \cite{FANO}. We only give a sketch of the idea. 
For \eqref{G_proposition1}, for any $\Gamma$-equivariant 
special degeneration of $X$ in the sense of 
\cite{DS}, the corresponding prime divisor $F$ over $X$ in \cite[Theorem 5.1]{vst} 
obviously satisfies the assumptions in \eqref{G_proposition1}. By \cite[Theorem 5.1]{vst}, 
the signatures of the Donaldson-Futaki invariant of the special degeneration and 
$A(F)-S(F)$ are same. Thus we can apply \cite[Theorem 1]{DS}. 
For \eqref{G_proposition2}, we may assume that there exists a $\Gamma$-invariant 
prime divisor $F$ over $X$ with $A(F)\leq S(F)$ such that the $\C$-algebra in 
\eqref{G_proposition1} is finitely generated. By Lemma \ref{alpha-tau_lemma} and 
\cite[Theorem 4.1]{alpha}, $X$ is isomorphic to $\pr^n$ and then we complete the proof.

\section{On the volume functions}\label{volume_section}

In this section, we generalize \cite[Proposition 2.1]{pltK} in order to show that 
$V_2$ admits K\"ahler-Einstein metrics. 

\begin{proposition}\label{vol_proposition}
Let $(X, \Delta)$ be an $n$-dimensional log Fano pair, let $F$ be a prime divisor over $X$, 
and let $0<a<b$ be positive real numbers. Assume that 
\[
\vol(-(K_X+\Delta)-xF)=\left(\frac{b-x}{b-a}\right)^n\vol(-(K_X+\Delta)-aF)
\]
for any $x\in[a,b]$. Then we have 
\[
S(F)\leq\frac{(n-1)a+b}{n+1}.
\]
\end{proposition}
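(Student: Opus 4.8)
The plan is to express $S(F)$ as an integral of the volume function and split the range of integration at $x=a$. Set $V(x):=\vol(-(K_X+\Delta)-xF)$ and $V_0:=V(0)=(-(K_X+\Delta))^{\cdot n}$. Since the hypothesis forces $V(b)=0$ and $V$ is non-increasing, we have $\tau(F)\le b$ and $V\equiv 0$ on $[b,\infty)$, so
\[
S(F)=\frac{1}{V_0}\int_0^b V(x)\,dx=\frac{1}{V_0}\left(\int_0^a V(x)\,dx+\int_a^b V(x)\,dx\right).
\]
The second integral is computed exactly from the hypothesis: the substitution $u=(b-x)/(b-a)$ gives $\int_a^b V(x)\,dx=\frac{b-a}{n+1}V(a)$. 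This is the only place the explicit form on $[a,b]$ is used directly, and it already contributes the ``$+b$'' in the target constant.

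First I would record the constraints at $x=a$. By \cite{L1,L2} the function $V$ is $C^1$, so the one-sided derivatives match and the explicit form pins down $V'(a)=-\tfrac{n}{b-a}V(a)$; moreover $V^{1/n}$ is concave and is linear on $[a,b]$, so its graph lies below the tangent line at $a$, yielding $V(x)\le\left(\tfrac{b-x}{b-a}\right)^n V(a)$ for all $x\in[0,a]$ as well as $V_0\le\left(\tfrac{b}{b-a}\right)^n V(a)$. The equality profile I expect to govern the bound is $V(x)=V_0-c\,x^n$ on $[0,a]$ (equivalently $V'(0)=0$, with $V^{1/n}$ an ellipse-type arc); a direct computation for this profile gives $V_0=\tfrac{b}{b-a}V(a)$ and $S(F)=\tfrac{(n-1)a+b}{n+1}$ exactly, so this is the candidate extremizer.

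The hard part is the estimate on $[0,a]$, and here concavity of $V^{1/n}$ alone is provably insufficient: it yields only the weaker constant $\tfrac{na+b}{n+1}$, saturated by a degenerate ``almost-constant-then-power'' profile that is neither $C^1$ nor a genuine volume function. The decisive extra input is that $V$ is a true volume function: on each Zariski chamber it equals the top self-intersection of a nef class varying linearly in $x$, hence is a polynomial of degree $\le n$, so $-V'$ is piecewise of degree $\le n-1$ and cannot concentrate near $x=a$ beyond the extremal rate. Combined with monotonicity $V'\le 0$ (so the slope at $0$ cannot be positive), this rules out the degenerate profile and, on the chamber adjacent to $a$, forces in particular the lower bound $V_0\ge\tfrac{b}{b-a}V(a)$, which is exactly the inequality concavity fails to give. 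Both $\int_0^a V$ and $V_0$ then become functions of the same chamber data, and I would show $S(F)$ is monotone in the single free parameter, attaining its maximum $\tfrac{(n-1)a+b}{n+1}$ precisely at the profile $V'(0)=0$. I would carry out this one-parameter computation explicitly rather than bounding numerator and denominator separately, since those are extremized by different profiles and their naive combination is too lossy.

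The main obstacle I anticipate is making the $[0,a]$ estimate uniform when $[0,a]$ meets several Zariski chambers: one must argue that inserting additional walls only makes $V^{1/n}$ more concave and decreases $\int_0^a V$ relative to $V_0$, so that the single-chamber profile $V_0-c\,x^n$ remains the extremizer. I would handle this either by reducing to the chamber adjacent to $a$ and running the one-parameter optimization there, or by an integration-by-parts argument controlling the first moment $\int_0^a x(-V'(x))\,dx$ through the degree constraint on $-V'$ together with the fixed total mass $\int_0^a(-V'(x))\,dx=V_0-V(a)$ and the pinned value $-V'(a)=\tfrac{n}{b-a}V(a)$.
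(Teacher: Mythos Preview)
Your proposal has a genuine gap: you never invoke the log-concavity of the \emph{restricted volume} $f(x):=-\tfrac{1}{n}V'(x)$, and this is precisely the input that does the work. By \cite[Theorem~A]{ELMNP} the function $f^{1/(n-1)}$ is concave on $[0,b)$; since $f\ge 0$ and $f(a)>0$, concavity forces the pointwise lower bound $f(x)\ge (x/a)^{n-1}f(a)$ on $[0,a]$. Integrating this together with the exact expression $f(x)=((b-x)/(b-a))^{n-1}f(a)$ on $[a,b]$ already gives $V_0=n\int_0^b f\ge b\,f(a)=\tfrac{b}{b-a}V(a)$, which is the inequality you correctly flag as the crux and which concavity of $V^{1/n}$ alone cannot deliver. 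Your proposed substitute, the piecewise-polynomial structure from Zariski chambers, does not give this: the degenerate profile you worry about (constant on $[0,a]$, then $((b-x)/(b-a))^n V(a)$) is itself piecewise polynomial of degrees $0$ and $n$, so ``degree $\le n$ on each chamber'' does not rule it out, and your claimed bound ``on the chamber adjacent to $a$'' says nothing about $V_0=V(0)$ when $0$ lies in a different chamber.

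The paper's proof, having $f^{1/(n-1)}$ concave, then avoids your delicate one-parameter optimization entirely. It rescales the extremal $f$-profile to the comparison function
\[
g(x)=\begin{cases}(x/a)^{n-1}\cdot V_0/b & x\in[0,a],\\ ((b-x)/(b-a))^{n-1}\cdot V_0/b & x\in[a,b],\end{cases}
\]
which has the \emph{same} total mass $n\int_0^b g=V_0$ as $f$. On $[a,b]$ one has $g\ge f$ (since $V_0\ge b f(a)$), while on $[0,a]$ the difference $f^{1/(n-1)}-g^{1/(n-1)}$ is concave with a sign change, so $f-g$ changes sign exactly once on $[0,b]$. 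A single-crossing argument then yields $\int_0^b x f\le\int_0^b x g$, and computing the right side gives $\tfrac{(n-1)a+b}{n+1}V_0$. The missing idea in your plan is thus twofold: use concavity of $(-V')^{1/(n-1)}$ rather than of $V^{1/n}$, and compare first moments against a mass-matched profile via single crossing rather than trying to optimize over chamber data.
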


\begin{proof}
The proof looks similar to the argument in the proof of \cite[Theorem 1.2]{SZ}. 
From the assumption, we have $\tau(F)=b$. Set $V:=(-(K_X+\Delta))^{\cdot n}$. 
By \cite[Theorem A]{BFJ}, the function 
$\vol(-(K_X+\Delta)-xF)$ is $\sC^1$ over $x\in[0,b)$. 
Let us set 
\[
f(x):=-\frac{1}{n}\frac{d}{dx}\vol(-(K_X+\Delta)-xF)
\]
as in \cite[Proof of Proposition 2.1]{pltK}. (We note that $f(x)$ is a restricted volume 
function in the sense of \cite{ELMNP}.) Then, for any $x\in[a,b)$, we have 
\[
f(x)=\left(\frac{b-x}{b-a}\right)^{n-1}f(a).
\]
As in \cite[Proof of Proposition 2.1]{pltK}, we have 
\begin{eqnarray*}
V&=&n\int_0^bf(x)dx, \\
S(F)&=&\frac{1}{V}\cdot n\int_0^bxf(x)dx, \\
f(x)&\geq&\left(\frac{x}{a}\right)^{n-1}f(a)\,\,\, \text{for any }x\in[0,a].
\end{eqnarray*}
In particular, we get 
\begin{eqnarray*}
V&\geq&n\left(\int_0^a\left(\frac{x}{a}\right)^{n-1}f(a)dx+
\int_a^b\left(\frac{b-x}{b-a}\right)^{n-1}f(a)dx\right)\\
&=&b\cdot f(a).
\end{eqnarray*}
Set 
\[
g(x):=\begin{cases}
\left(\frac{x}{a}\right)^{n-1}\cdot\frac{V}{b} & \text{ for }x\in[0,a], \\
\left(\frac{b-x}{b-a}\right)^{n-1}\cdot\frac{V}{b} & \text{ for }x\in[a,b].
\end{cases}
\]
Then we have 
\begin{eqnarray*}
V&=&n\int_0^bg(x)dx, \\
g(x)&\geq&f(x)\,\,\,\text{ for any }x\in[a,b].
\end{eqnarray*}
Moreover, the function 
\[
f(x)^{\frac{1}{n-1}}-g(x)^{\frac{1}{n-1}}=f(x)^{\frac{1}{n-1}}
-\frac{x}{a}\cdot\left(\frac{V}{b}\right)^{\frac{1}{n-1}}
\]
is $\sC^0$ and concave over $x\in[0,a]$ (by \cite[Theorem A]{ELMNP}). Moreover, 
we have $f(0)^{\frac{1}{n-1}}-g(0)^{\frac{1}{n-1}}\geq 0$ and 
$f(a)^{\frac{1}{n-1}}-g(a)^{\frac{1}{n-1}}\leq 0$. Thus there exists $c\in [0,a]$ such that 
\begin{eqnarray*}
f(x)&\geq&g(x)\quad\text{for any }x\in[0,c], \\
g(x)&\geq&f(x)\quad\text{for any }x\in[c,a]. 
\end{eqnarray*}
Thus we get 
\begin{eqnarray*}
&&n\int_0^bxf(x)dx-cV=n\int_0^b(x-c)f(x)dx\\
&\leq&n\int_0^b(x-c)g(x)dx=n\int_0^bxg(x)dx-cV.
\end{eqnarray*}
Since 
\[
n\int_0^bxg(x)dx=\frac{(n-1)a+b}{n+1}V, 
\]
we get the assertion. 
\end{proof}

\begin{proposition}\label{pltK_proposition}
Let $(X, \Delta)$ be an $n$-dimensional log Fano pair, let $\eta\in X$ be a 
scheme-theoretic point, and let $0<t\leq s$ be positive real numbers. Assume that 
there exists a prime divisor $T$ on $X$ with $T\sim_\Q-k(K_X+\Delta)$ for some 
$k\in\Q_{>0}$ such that 
\begin{itemize}
\item
the pair $(X, \Delta+\frac{t}{k}T)$ is lc at $\eta$, and 
\item
for any effective $\Q$-divisor $D'\sim_\Q-(K_X+\Delta)$ with $T\not\subset\Supp D'$, 
the pair $(X, \Delta+sD')$ is lc at $\eta$. 
\end{itemize}
Then, for any prime divisor $F$ over $X$ with $\eta\in c_X(F)$, we have the following: 
\begin{enumerate}
\renewcommand{\theenumi}{\arabic{enumi}}
\renewcommand{\labelenumi}{(\theenumi)}
\item\label{pltK_proposition1}
If $s^{-1}A(F)\leq\frac{1}{k}\ord_F T$, then we have 
\begin{eqnarray*}
&&\vol(-(K_X+\Delta)-xF)\\
&=&\left(\frac{\frac{1}{k}\ord_F T-x}{\frac{1}{k}\ord_F T-s^{-1}A(F)}\right)^n
\vol\left(-(K_X+\Delta)-s^{-1}A(F)F\right)
\end{eqnarray*}
for any 
$x\in[s^{-1}A(F), \frac{1}{k}\ord_F T]$.
\item\label{pltK_proposition2}
We have 
\[
S(F)\leq\frac{A(F)}{n+1}\left((n-1)s^{-1}+t^{-1}\right).
\]
\end{enumerate}
\end{proposition}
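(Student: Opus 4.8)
The plan is to turn both bullet hypotheses into valuative inequalities, to prove the volume identity \eqref{pltK_proposition1} by a direct ``factoring out $T$'' computation, and then to feed \eqref{pltK_proposition1} into Proposition \ref{vol_proposition}; a short separate estimate will dispose of the case that \eqref{pltK_proposition1} does not cover. Throughout I write $L:=-(K_X+\Delta)$ and, for the fixed $F$, set $a:=s^{-1}A(F)$, $b:=\frac{1}{k}\ord_F T$ and $\beta:=t^{-1}A(F)$. Since being lc at $\eta$ forces every divisorial valuation whose center contains $\eta$ to have nonnegative log discrepancy, and $\eta\in c_X(F)$, the first bullet gives $A(F)-\frac{t}{k}\ord_F T\geq 0$, i.e. $b\leq\beta$, while the second gives $\ord_F D'\leq a$ for every effective $D'\sim_\Q L$ with $T\not\subset\Supp D'$. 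Because $t\leq s$ we also have $a\leq\beta$.

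Next I would record a decomposition. For any effective $D\sim_\Q L$ put $\mu:=k\coeff_T D$ and $D'':=D-(\coeff_T D)\,T$, so that $D''\geq 0$, $T\not\subset\Supp D''$ and $D''\sim_\Q(1-\mu)L$; ampleness of $L$ forces $\mu\leq 1$. When $\mu<1$ the $\Q$-divisor $\frac{1}{1-\mu}D''$ lies in $|L|_\Q$ and avoids $T$, so $\ord_F D''\leq(1-\mu)a$ and hence $\ord_F D=\mu b+\ord_F D''\leq\mu b+(1-\mu)a$. This already yields $\tau(F)=b$ when $a\leq b$ and $\tau(F)\leq a$ when $a>b$, and, in the first case, the crucial lower bound $\mu\geq\frac{\ord_F D-a}{b-a}$. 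To prove \eqref{pltK_proposition1}, fix $x\in[a,\frac{1}{k}\ord_F T]$ and take $m$ sufficiently divisible. The lower bound shows that every nonzero element of $H^0(X,mL-mxF)$ vanishes along $T$ to order at least $N:=\frac{m}{k}\cdot\frac{x-a}{b-a}$; dividing by the $N$-th power of the section cutting out $T$ — whose order along $F$ is $\ord_F T=kb$ — identifies $H^0(X,mL-mxF)$ with $H^0(X,m'L-m'aF)$, where $m':=m\frac{b-x}{b-a}$ (one checks $m-Nk=m'$ and $mx-Nkb=m'a$). Dividing the resulting equality of dimensions by $m^n/n!$ and letting $m\to\infty$ gives the asserted identity $\vol(L-xF)=\left(\frac{b-x}{b-a}\right)^n\vol(L-aF)$. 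The bookkeeping with denominators and roundings is routine and washes out in the limit.

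For \eqref{pltK_proposition2} I would split on the sign of $b-a$. If $a\leq b$, then by \eqref{pltK_proposition1} the function $\vol(L-xF)$ has on $[a,b]$ exactly the shape assumed in Proposition \ref{vol_proposition} (with $\tau(F)=b$), so that proposition gives $S(F)\leq\frac{(n-1)a+b}{n+1}$; since $b\leq\beta$ this is at most $\frac{(n-1)a+\beta}{n+1}=\frac{A(F)}{n+1}\left((n-1)s^{-1}+t^{-1}\right)$, as required. If instead $a>b$, then \eqref{pltK_proposition1} is unavailable, but the decomposition gives $\tau(F)\leq a\leq\beta$, and I would combine this with the elementary bound $S(F)\leq\frac{n}{n+1}\tau(F)$. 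Indeed, writing $f(x)=-\frac{1}{n}\frac{d}{dx}\vol(L-xF)$ as in Proposition \ref{vol_proposition}, integration by parts (using $\vol(L-\tau(F)F)=0$) exhibits $S(F)$ as the barycenter $\left(\int_0^{\tau(F)}xf\,dx\right)/\left(\int_0^{\tau(F)}f\,dx\right)$; since $L$ is ample there is no flat part and $f^{1/(n-1)}$ is concave on $[0,\tau(F))$ by \cite{ELMNP}, whence this barycenter is at most $\frac{n}{n+1}\tau(F)$. Therefore $S(F)\leq\frac{n}{n+1}a=\frac{(n-1)a+a}{n+1}\leq\frac{(n-1)a+\beta}{n+1}$, finishing the case $a>b$.

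The main obstacle is the volume identity \eqref{pltK_proposition1}: the heart is not the factoring computation but the input that makes it work, namely the fact that high vanishing along $F$ can only be achieved through $T$ (the lower bound on $\mu$), which is precisely where both lc hypotheses and the ampleness of $L$ are used. The case $a>b$ of \eqref{pltK_proposition2} is the other delicate point, since there the exact volume formula is lost and one must instead control $S(F)$ through the threshold estimate $\tau(F)\leq a$ together with the concavity of $f^{1/(n-1)}$; verifying that this concavity genuinely forbids the barycenter from exceeding $\frac{n}{n+1}\tau(F)$ (equivalently, that $L$ ample rules out a flat segment of $\vol$) is the step I would check most carefully.
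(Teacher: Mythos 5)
Your argument is, in substance, the paper's own. For \eqref{pltK_proposition1}, writing $D=\frac{\mu}{k}T+D''$, extracting the lower bound $\mu\geq\frac{x-a}{b-a}$ from the two lc hypotheses, and factoring the forced fixed part $N\cdot T$ out of $|{-m}(K_X+\Delta)-mxF|$ is exactly the paper's proof, phrased as an isomorphism of section spaces rather than as an equality of volumes on a log resolution; for \eqref{pltK_proposition2}, the case division and the use of Proposition \ref{vol_proposition} together with $\frac{1}{k}\ord_F T\leq t^{-1}A(F)$ also coincide with the paper.

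There are, however, two points to repair in the second part. The minor one: when $a=b$ you cannot invoke Proposition \ref{vol_proposition}, since it requires $a<b$ strictly; this boundary case must be folded into the other branch of your case division (there $\tau(F)\leq a$ still holds and the same estimate applies), which is how the paper splits the cases. The substantive one: in the case $a>b$ the paper simply cites \cite[Proposition 2.1]{pltK} for the inequality $S(F)\leq\frac{n}{n+1}\tau(F)$, whereas you attempt to re-derive it, and your justification has a gap. The concavity of $f^{1/(n-1)}$ on all of $[0,\tau(F))$ is not what \cite{ELMNP} provides: there, log-concavity of restricted volumes (and, via \cite{BFJ}, the identification of $f$ with a restricted volume) is established only for big classes whose augmented base locus does not contain $F$, and nothing guarantees $F\not\subset\mathbf{B}_{+}\left(-\pi^*(K_X+\Delta)-xF\right)$ for all $x<\tau(F)$. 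Ampleness of $-(K_X+\Delta)$ only controls the situation at $x=0$, and your parenthetical ``no flat part'' claim is neither a consequence of ampleness nor the relevant issue: once concavity is granted, the barycenter bound follows irrespective of flat parts (for instance, $\int_0^y f\,dx\geq\frac{y}{n}f(y)$ shows $y^{-n}\int_0^y f\,dx$ is non-increasing, and integration by parts finishes). The inequality you need is in fact true unconditionally --- it can be proved via Okounkov bodies, where the slice volumes compute $f$ and Brunn--Minkowski supplies the concavity --- but as written the step is unjustified; the clean fix is to quote \cite[Proposition 2.1]{pltK}, which is precisely this statement and is what the paper does.
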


\begin{proof}
\eqref{pltK_proposition1}
Let us fix a log resolution $\pi\colon\tilde{X}\to X$ of $(X, \Delta)$ with 
$F\subset\tilde{X}$. 
Take any effective $\Q$-divisor $D\sim_\Q-(K_X+\Delta)$. Then there uniquely exists 
$e\in[0,1]\cap\Q$ such that we can write $D=\frac{e}{k}T+(1-e)D'$ with 
$D'\sim_\Q-(K_X+\Delta)$ effective and $T\not\subset\Supp D'$. 
Since $(X, \Delta+sD')$ is lc at $\eta$, we have $\ord_F D'\leq s^{-1}A(F)$. 
Assume that $x\in(s^{-1}A(F),\frac{1}{k}\ord_F T]\cap\Q$ satisfies that $\ord_F D\geq x$. 
(In other words, $mD\in|-m(K_X+\Delta)-mxF|$ holds for a sufficiently divisible $m\in\Z_{>0}$.) 
Then we have 
\[
e\geq\frac{x-s^{-1}A(F)}{\frac{1}{k}\ord_F T-s^{-1}A(F)},
\]
since 
\[
x\leq\ord_F D\leq e\cdot\frac{1}{k}\ord_F T+(1-e)s^{-1}A(F).
\]
This implies that the linear system $|-m(K_X+\Delta)-mxF|$ has a fixed divisor 
\[
m\cdot\frac{x-s^{-1}A(F)}{\frac{1}{k}\ord_F T-s^{-1}A(F)}\cdot\frac{1}{k}T
\]
for $m\in\Z_{>0}$ sufficiently divisible. Thus we get 
\begin{eqnarray*}
&&\vol(-(K_X+\Delta)-xF)\\
&=&\vol_{\tilde{X}}\left(-\pi^*(K_X+\Delta)-xF-
\frac{x-s^{-1}A(F)}{\frac{1}{k}\ord_F T-s^{-1}A(F)}\frac{1}{k}(\pi^*T-(\ord_F T)F)\right)\\
&=&\left(\frac{\frac{1}{k}\ord_F T-x}{\frac{1}{k}\ord_F T-s^{-1}A(F)}\right)^n
\vol\left(-(K_X+\Delta)-s^{-1}A(F)F\right).
\end{eqnarray*}

\eqref{pltK_proposition2}
Since the pair $(X, \Delta+\frac{t}{k}T)$ is lc at $\eta$, we have 
$\frac{1}{k}\ord_F T\leq t^{-1}A(F)$. 
If $\frac{1}{k}\ord_F T\leq s^{-1}A(F)$, then we have 
$\ord_F D\leq s^{-1}A(F)$ for any effective $\Q$-divisor $D\sim_\Q-(K_X+\Delta)$. 
Thus we get the inequality 
$s\leq\frac{A(F)}{\tau(F)}$. By \cite[Proposition 2.1]{pltK}, 
we get 
\begin{eqnarray*}
S(F)&\leq&\frac{n}{n+1}\tau(F)\leq\frac{n}{n+1}s^{-1}A(F)\\
&\leq&\frac{A(F)}{n+1}
\left((n-1)s^{-1}+t^{-1}\right).
\end{eqnarray*}
Thus we may assume that $\frac{1}{k}\ord_F T> s^{-1}A(F)$. In this case, we can apply (1). 
We have 
\begin{eqnarray*}
S(F)&\leq&\frac{1}{n+1}\left((n-1)s^{-1}A(F)+\frac{1}{k}\ord_F T\right)\\
&\leq&\frac{A(F)}{n+1}\left((n-1)s^{-1}+t^{-1}\right)
\end{eqnarray*}
by Proposition \ref{vol_proposition}. 
\end{proof}

\section{On quasi-log schemes}\label{q_section}

The theory of \emph{quasi-log schemes} plays an important role for the study of 
birational geometry. In this section, we see 
a kind of subadjunction theorem for projective qlc strata on quasi-log schemes, which is 
a direct consequence of the recent work \cite{fujino_hyp}. 
For the theory of quasi-log schemes, we refer the readers to \cite{fujino}. 

An $\R$-Weil divisor $D$ on a normal projective variety is said to be 
\emph{pseudo-effective} in this paper if $D+A$ is big (i.e., there exists an effective 
$\R$-Weil divisor $E$ such that $D+A-E$ is an ample $\R$-divisor) for any 
ample $\R$-divisor $A$.

\begin{thm}[{see \cite[Lemma 4.17, Theorems 1.9 
and 7.1]{fujino_hyp}}]\label{subadjunction_thm}
Let $[X, \omega]$ be a quasi-log scheme, let $C\subset X$ be a qlc stratum 
of $[X, \omega]$, and let $\nu\colon \bar{C}\to C$ be the normalization of $C$. 
Assume that $C$ is a projective variety. 
Then $\nu^*(\omega|_C)-K_{\bar{C}}$ is a pseudo-effective $\R$-Weil 
divisor on $\bar{C}$. 
\end{thm}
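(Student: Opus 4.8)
## Proof Proposal

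The plan is to reduce the statement to the subadjunction results cited from \cite{fujino_hyp}, namely \cite[Lemma 4.17, Theorems 1.9 and 7.1]{fujino_hyp}, by extracting from the quasi-log structure on $[X,\omega]$ a natural quasi-log structure on the qlc stratum $C$ and then transporting it to its normalization $\bar C$. The key structural input is that a qlc stratum $C$ of a quasi-log scheme is itself (after normalization) equipped with a quasi-log structure in which $C$ is, so to speak, the ambient scheme; this is precisely the content of the adjunction/inversion-of-adjunction machinery for quasi-log schemes developed by Fujino. So the first step is to invoke this inherited quasi-log structure: by \cite[Lemma 4.17]{fujino_hyp} (the adjunction statement for qlc strata), there is an $\R$-divisor structure on $\bar C$ making $[\bar C, \nu^*(\omega|_C)]$ a quasi-log scheme, compatible via the normalization morphism $\nu$.

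The second step is to unwind what this inherited structure tells us about $\nu^*(\omega|_C)-K_{\bar C}$. On a quasi-log scheme $[\bar C, \omega_{\bar C}]$ with $\omega_{\bar C}=\nu^*(\omega|_C)$, the class $\omega_{\bar C}$ is, up to the defining $\R$-linear equivalence of the quasi-log structure, of the form $K_{\bar C}+B$ for some effective $\R$-divisor $B$ plus a contribution that is pseudo-effective. Concretely, the defect $\nu^*(\omega|_C)-K_{\bar C}$ should be represented by the boundary-type divisor arising in the subadjunction formula together with a pseudo-effective correction. Here I would cite \cite[Theorem 1.9]{fujino_hyp}, which furnishes exactly the subadjunction formula expressing $\nu^*(\omega|_C)$ as $K_{\bar C}$ plus an effective piece modulo pseudo-effectivity, under the hypothesis that $C$ is projective. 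The projectivity assumption on $C$ is what guarantees that the notion of pseudo-effectivity (as defined in the excerpt, via bigness of $D+A$ for all ample $A$) is the correct and well-behaved one, and that the relevant cohomological positivity statements apply.

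The third step is to assemble these into the claimed pseudo-effectivity. Having written $\nu^*(\omega|_C)-K_{\bar C}$ as a sum of an effective $\R$-Weil divisor and a pseudo-effective class coming from the quasi-log structure, I conclude that the total is pseudo-effective, since effective divisors are pseudo-effective and pseudo-effectivity is preserved under addition. For the positivity of the correction term I would appeal to \cite[Theorem 7.1]{fujino_hyp}, which is the source for the semi-positivity/pseudo-effectivity of the Hodge-theoretic contribution in the subadjunction formula; this is the analogue in the quasi-log setting of the positivity of the moduli part (the $M$-part) in the canonical bundle formula.

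The main obstacle I anticipate is purely one of correctly invoking and bookkeeping the three cited results so that the pieces line up: one must verify that the quasi-log structure really does descend to the normalization $\bar C$ with ambient divisor exactly $\nu^*(\omega|_C)$ (rather than some twist), and that the projectivity of $C$ is genuinely the hypothesis under which \cite[Theorem 1.9]{fujino_hyp} and \cite[Theorem 7.1]{fujino_hyp} produce a \emph{pseudo-effective} rather than merely \emph{nef-after-pushforward} class. Since the excerpt explicitly presents this theorem as a \emph{direct consequence} of \cite{fujino_hyp}, I expect that the entire argument is the formal concatenation of these three citations, and the only real work is checking the compatibility of normalizations and the precise form of the subadjunction formula; no independent geometric construction should be needed.
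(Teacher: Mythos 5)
Your proposal matches the paper's proof in essentially all respects: both reduce to the qlc stratum via adjunction, pass to the normalization, and then invoke the canonical-bundle-formula-type decomposition of $\omega$ into $K$ plus an effective boundary part plus a nef moduli part, concluding pseudo-effectivity by pushing forward. The only (harmless) difference is bookkeeping: in the paper \cite[Theorem 1.9]{fujino_hyp} is what handles the passage to the normalization, while \cite[Theorem 7.1]{fujino_hyp} supplies the full decomposition $K_{X'}+B_{X'}+M_{X'}=p^*\omega$ on a smooth birational model $X'$ of $\bar{C}$ (with $M_{X'}$ nef), rather than splitting the subadjunction formula and the positivity of the moduli part between the two citations as you do.
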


\begin{proof}
By \cite[Lemma 4.19]{fujino_hyp}, we may assume that $X=C$. Moreover, 
by \cite[Theorem 1.9]{fujino_hyp}, we may further assume that $X=C$ is normal. 
By \cite[Theorem 7.1]{fujino_hyp}, there exists a projective birational morphism 
$p\colon X'\to X$ from a smooth projective variety $X'$ such that we can write 
\[
K_{X'}+B_{X'}+M_{X'}=p^*\omega,
\]
where $B_{X'}$ is an effective $\R$-Weil divisor on $X'$ with $B_{X'}^{<0}$ $p$-exceptional, 
and $M_{X'}$ is a nef $\R$-divisor on $X'$. This immediately implies that 
\[
\omega-K_X=p_*\left(B_{X'}+M_{X'}\right)
\]
is a pseudo-effective $\R$-Weil divisor on $X$. 
\end{proof}

As a corollary, we get the following result, which is important for the proof of 
Theorem \ref{main_thm}. 

\begin{corollary}\label{qlc_corollary}
Let $(X, \Delta)$ be a log Fano pair, let $D\sim_\Q-(K_X+\Delta)$ be an effective 
$\Q$-divisor, and let $\alpha\in(0,1)\cap\Q$. 
Assume that the pair $(X, \Delta+\alpha D)$ is not klt. Let $\Nklt(X, \Delta+\alpha D)$ 
be the locus of non-klt points of $(X, \Delta+\alpha D)$. 
\begin{enumerate}
\renewcommand{\theenumi}{\arabic{enumi}}
\renewcommand{\labelenumi}{(\theenumi)}
\item\label{qlc_corollary1} 
The locus $\Nklt(X, \Delta+\alpha D)$ is connected. 
\item\label{qlc_corollary2} 
Take any $1$-dimensional irreducible component $B\subset\Nklt(X, \Delta+\alpha D)$ 
with its reduced scheme structure. Then $B$ is a rational curve with 
\[
\left(-(K_X+\Delta)\cdot B\right)\leq\frac{2}{1-\alpha}.
\]
Moreover, 
if any irreducible component of $\Nklt(X, \Delta+\alpha D)$ is of dimension 
$\leq 1$, then 
$B\simeq\pr^1$ and 
the restriction homomorphism 
\[
H^0(X, L)\to H^0(B, L|_B)
\]
is surjective for any nef line bundle $L$ on $X$.
\end{enumerate}
\end{corollary}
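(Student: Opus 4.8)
The plan is to run everything through Fujino's quasi-log formalism, hinging on one elementary observation. Since $0<\alpha<1$ and $D\sim_\Q-(K_X+\Delta)$, setting $\omega:=K_X+\Delta+\alpha D$ we have
\[
\omega\sim_\Q(1-\alpha)(K_X+\Delta),\qquad -\omega\sim_\Q(1-\alpha)\bigl(-(K_X+\Delta)\bigr),
\]
so $-\omega$ is \emph{ample}; in particular, for every curve $B\subset X$ the degree $(\omega\cdot B)=-(1-\alpha)\bigl(-(K_X+\Delta)\cdot B\bigr)$ is strictly negative. This negativity will drive rationality and the degree bound via Theorem \ref{subadjunction_thm}, while the ampleness of $-\omega$ powers the vanishing theorems. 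For \eqref{qlc_corollary1} I would use Nadel vanishing: writing $\sJ:=\sJ(X,\Delta+\alpha D)$ for the multiplier ideal, whose cosupport is exactly $N:=\Nklt(X,\Delta+\alpha D)$, I consider
\[
0\to\sJ\to\sO_X\to\sO_N\to 0.
\]
As $-\omega$ is nef and big, Nadel vanishing gives $H^1(X,\sJ)=0$, hence $H^0(X,\sO_X)\to H^0(N,\sO_N)$ is surjective and $\dim_\C H^0(N,\sO_N)=1$; since this number bounds the count of connected components of $N$ from below, $N$ is connected. This is the quasi-log incarnation of the Shokurov--Kollár connectedness lemma and is insensitive to whether the pair is log canonical.

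For the first assertion of \eqref{qlc_corollary2} I would equip $N$ with its natural quasi-log structure, so that a $1$-dimensional irreducible component $B$ arises as a qlc stratum of the relevant structure (of $[N,\omega|_N]$, or of $[X_{-\infty},\omega|_{X_{-\infty}}]$ if $B$ lies in the non-qlc locus). Let $\nu\colon\bar B\to B$ be the normalization. Theorem \ref{subadjunction_thm} says $\nu^*(\omega|_B)-K_{\bar B}$ is pseudo-effective on the curve $\bar B$, hence of nonnegative degree. Since $\nu$ is birational, $\deg_{\bar B}\nu^*(\omega|_B)=(\omega\cdot B)<0$, so
\[
2g(\bar B)-2\leq(\omega\cdot B)=-(1-\alpha)\bigl(-(K_X+\Delta)\cdot B\bigr)<0.
\]
This forces $g(\bar B)=0$, so $B$ is rational, and rearranging yields $\bigl(-(K_X+\Delta)\cdot B\bigr)\leq(2-2g(\bar B))/(1-\alpha)\leq 2/(1-\alpha)$.

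For the ``moreover'' part, assume $\dim N\leq 1$. To upgrade ``$B$ rational'' to $B\simeq\pr^1$, the bare pseudo-effectivity is not enough; I would instead use the finer subadjunction underlying Theorem \ref{subadjunction_thm}, namely the discriminant--moduli decomposition $\nu^*(\omega|_B)=K_{\bar B}+B_{\bar B}+M_{\bar B}$ with $B_{\bar B}$ effective and $M_{\bar B}$ pseudo-effective, in which the conductor of $\nu$ sits inside $B_{\bar B}$. If $B$ were non-normal, the conductor would contribute degree $\geq 2$, giving $(\omega\cdot B)=-2+\deg B_{\bar B}+\deg M_{\bar B}\geq -2+2=0$, contradicting $(\omega\cdot B)<0$; hence $B$ is normal, i.e.\ $B\simeq\pr^1$. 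For the surjectivity of $H^0(X,L)\to H^0(B,L|_B)$ with $L$ nef, I note $L-\omega\sim_\Q L+(1-\alpha)\bigl(-(K_X+\Delta)\bigr)$ is ample, so Fujino's vanishing theorem for the quasi-log scheme $[X,\omega]$ gives $H^1\bigl(X,\sI_B\otimes\sO_X(L)\bigr)=0$, which is exactly the asserted surjectivity.

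The main obstacle is this ``moreover'' part, and it is twofold. First, the surjectivity cannot be obtained by a naive connected-component argument: by \eqref{qlc_corollary1} the scheme $N$ is connected, so $B$ is \emph{not} a connected component, and isolating a single qlc stratum $B$ of the connected $N$ is precisely where the quasi-log vanishing and adjunction machinery is indispensable; the hypothesis $\dim N\leq 1$ enters to guarantee that $B$ is a maximal stratum, and one must additionally keep track of the non-qlc locus $\operatorname{Nlc}(X,\Delta+\alpha D)$ when the pair fails to be log canonical. Second, the normality statement $B\simeq\pr^1$ genuinely requires extracting the conductor term $B_{\bar B}$ from the subadjunction formula rather than only its pseudo-effective shadow, so I would need to open up the proof of Theorem \ref{subadjunction_thm} (or invoke the discriminant--moduli version directly) to access it.
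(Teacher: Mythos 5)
Your part \eqref{qlc_corollary1} is fine: Nadel vanishing for the multiplier ideal is a legitimate substitute for the paper's use of the quasi-log vanishing theorem, and gives connectedness the same way. The first genuine gap is in the first half of \eqref{qlc_corollary2}: your application of Theorem \ref{subadjunction_thm} presupposes that $B$ is a qlc stratum of some induced quasi-log structure, but a $1$-dimensional irreducible component of $\Nklt(X,\Delta+\alpha D)$ need not be one. By adjunction for quasi-log schemes (\cite[Theorem 6.3.5 (i)]{fujino}) the qlc strata of $[N,\omega|_N]$ and of $[X_{-\infty},\omega|_{X_{-\infty}}]$ are exactly the qlc strata of $[X,\omega]$ contained in them, so a component of the non-qlc locus --- or any component of $\Nklt$ that is not an lc centre, e.g.\ where $D$ has large multiplicity along $B$ --- is not reached by your ``relevant structure''. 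The paper's fix, which you are missing, is to first replace $\alpha$ by the log canonical threshold of $(X,\Delta;D)$ at the generic point of $B$: this makes $B$ an lc centre, hence a genuine qlc stratum, and since the new $\alpha$ is no larger the degree bound only improves.

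The second gap is in the ``moreover'' part. The vanishing $H^1\bigl(X,\sI_B\otimes\sO_X(L)\bigr)=0$ that you assert does not follow from Fujino's vanishing theorem: that theorem applies to the defining ideal of a union $X'$ of $X_{-\infty}$ and qlc strata, and since $N$ is connected by \eqref{qlc_corollary1}, the single component $B$ is in general neither such a union nor does it contain $X_{-\infty}$. You correctly flag this as ``the main obstacle'' but then do not actually overcome it. The paper's route is different and does close: from $H^1(X,\sO_X)=H^2(X,I_N)=0$ one gets $H^1(N,\sO_N)=0$; since $\dim N\leq 1$ this forces $B\simeq\pr^1$ and, by \cite[Lemma 4.13 and the proof of Lemma 4.50]{KoMo}, the restriction $H^0(N,L|_N)\to H^0(B,L|_B)$ is surjective for nef $L$; composing with the surjection $H^0(X,L)\twoheadrightarrow H^0(N,L|_N)$ coming from $H^1(X,L\otimes I_N)=0$ gives the claim. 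This argument also supersedes your conductor computation for the normality of $B$, which, as you yourself note, cannot be extracted from the statement of Theorem \ref{subadjunction_thm} and would require reopening its proof.
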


\begin{proof}
By \cite[6.4.1]{fujino}, the pair $(X, \Delta+\alpha D)$ admits a quasi-log structure 
$[X, \omega]$ with 
$\omega=K_X+\Delta+\alpha D$, and 
$N:=\Nklt(X, \Delta+\alpha D)$ has a natural scheme structure with 
\[
N=\bigcup_C C\cup X_{-\infty},
\]
where $C$ are the lc centers of $(X, \Delta+\alpha D)$ and $X_{-\infty}$
is the non-qlc locus of $(X, \Delta+\alpha D)$. 

For any nef line bundle $L$ on $X$, since 
\[
L-(K_X+\Delta+\alpha D)\sim_\Q L+(1-\alpha)(-K_X-\Delta)
\]
is ample, we have 
\[
H^i(X, L\otimes I_N)=0
\]
for any $i>0$ by \cite[Theorem 6.3.5 (ii)]{fujino}, where $I_N\subset\sO_X$ is the defining 
ideal sheaf of $N\subset X$. 

\eqref{qlc_corollary1}
For $L:=\sO_X$, we get the surjection 
\[
H^0(X, \sO_X)\twoheadrightarrow H^0(N, \sO_N).
\]
Thus $N$ is connected. 

\eqref{qlc_corollary2}
After replacing $\alpha$ with the log canonical threshold of $(X, \Delta; D)$ at the 
generic point of $B$, we may assume that $B$ is an lc center of $(X, \Delta+\alpha D)$. 
Take the normalization $\nu\colon \bar{B}\to B$. 
By Theorem \ref{subadjunction_thm}, the $\Q$-divisor 
\[
\nu^*\left((K_X+\Delta+\alpha D)|_B\right)-K_{\bar{B}}
\]
is pseudo-effective. 
This implies that $\bar{B}\simeq\pr^1$ and 
\begin{eqnarray*}
0&\leq&\deg_{\bar{B}}\left(\nu^*\left((K_X+\Delta+\alpha D)|_B\right)-K_{\bar{B}}\right)\\
&=&
-(1-\alpha)\left(-(K_X+\Delta)\cdot B\right)+2.
\end{eqnarray*}

Now we assume that $\dim N\leq 1$. 
Since $H^1(X, \sO_X)=H^2(X, I_N)=0$, we get $H^1(N, \sO_N)=0$. 
From the assumption $\dim N\leq 1$, we get $H^1(B, \sO_B)=0$, i.e., $B\simeq\pr^1$. 
Moreover, by \cite[Lemma 4.13 and the proof of Lemma 4.50]{KoMo}, 
the restriction homomorphism 
\[
H^0(N, L|_N)\to H^0(B, L|_B)
\]
is surjective for any nef line bundle $L$ on $X$. Since $H^1(X, L\otimes I_N)=0$, 
\[
H^0(X,L)\to H^0(B, L|_B)
\]
is also surjective. 
Thus we get the assertions.
\end{proof}

\section{Proof of Theorem \ref{main_thm}}\label{proof_section}

By Theorem \ref{CS_thm} and 
Proposition \ref{G_proposition} \eqref{G_proposition2}, 
we may assume that $u=2$. Set $X:=V_2$. Take any $G$-invariant prime divisor $F$ 
over $X$. (Recall that $G:=\C^*\rtimes(\Z/2\Z)$.)
By Proposition \ref{G_proposition} \eqref{G_proposition1}, it is enough to show 
the inequality $A(F)>S(F)$. Set $C:=c_X(F)$ and let $\eta\in X$ be the generic point of $C$. 
By Lemma \ref{alpha-tau_lemma} and \cite[Proposition 2.1]{pltK}, we may assume that 
$\alpha_{G,\eta}(X)\leq\frac{3}{4}$. Note that $C$ is a $G$-invariant subvariety on $X$. 
Thus $C$ is not a closed point by \cite[Lemma 2.23]{CS}. Moreover, if $F$ is a prime 
divisor on $X$, then $A(F)>S(F)$ holds by \cite[Corollary 9.3]{div_stability}. Thus 
we may assume that $C$ is a $G$-invariant curve. 
By Lemma \ref{solvable_lemma}, there exists a $G$-invariant effective $\Q$-divisor 
$D\sim_\Q-K_X$ and there exists $\alpha\in[\alpha_{G,\eta}(X),4/5)\cap\Q$ such that 
the pair $(X, \alpha D)$ is lc but not klt at $\eta$. We note that the non-klt locus 
of the pair $(X, \alpha D)$ is of dimension $\leq 1$ since $\Pic(X)=\Z[-K_X]$ and 
$\alpha<1$. 

By Corollary \ref{qlc_corollary}, $C$ is a smooth rational curve with 
\[
(-K_X\cdot C)\leq \frac{2}{1-\alpha}<10. 
\]
Moreover, 
for any $m\in\Z_{\geq 0}$, the restriction homomorphism 
\[
H^0(X, \sO_X(-mK_X))\to H^0(C, \sO_X(-mK_X)|_{C})
\]
is surjective. Thus $C\subset X\subset\pr^{13}=\pr^*H^0(X, -K_X)$ is a $G$-invariant 
rational normal curve in $\pr^{13}$ with $\deg C<10$. 

By \cite[Proposition 4.12, Lemma 7.7 and Corollary 7.10]{CS} and the assumption 
$\alpha_{G, \eta}(X)\leq\frac{3}{4}$, we may assume that $C=\sC_4$, where 
$\sC_4\subset X$ is the unique $G$-invariant rational normal curve of anti-canonical 
degree $4$ in $X$ (see \cite{CS} for the definition of the curve $\sC_4$). By 
\cite[Lemma 5.2 and the proof of Lemma 7.14]{CS}, there exists a prime divisor 
$T'_{15}\sim -K_X$ on $X$ such that 
\begin{itemize}
\item
the pair $(X, \frac{2}{3}T'_{15})$ is lc at $\eta$, and 
\item
the pair $(X, D')$ is lc at $\eta$ for any effective $\Q$-divisor $D'\sim_\Q -K_X$ 
with $T'_{15}\not\subset\Supp D'$. 
\end{itemize}
By Proposition \ref{pltK_proposition} \eqref{pltK_proposition2}, we get 
\[
S(F)\leq \frac{A(F)}{4}\left(2\cdot 1^{-1}+\left(\frac{2}{3}\right)^{-1}\right)=\frac{7}{8}A(F). 
\]
Thus we get the desired inequality $A(F)>S(F)$ and we complete the proof of 
Theorem \ref{main_thm}.


\begin{thebibliography}{99}

\bibitem[ACCFKMGSSV]{FANO}
C.\ Araujo, A-M.\ Castravet, I.\ Cheltsov, K.\ Fujita, A-S.\ Kaloghiros, 
J.\ Martinez-Garcia, C.\ Shramov, H.\ S\"uss and N.\ Viswanathan, 
\emph{The Calabi problem for Fano threefolds}, MPIM preprint 2021-31.

\bibitem[BFJ09]{BFJ}
S.\ Boucksom, C.\ Favre and M.\ Jonsson, \emph{Differentiability of volumes of 
divisors and a problem of Teissier}, 
J.\ Algebraic Geom.\ \textbf{18} (2009), no.\ 2, 279--308. 

\bibitem[BS64]{borel-serre}
A.\ Borel and J.-P.\ Serre, \emph{Th\'eor\`emes de finitude en cohomologie galoisienne}, 
Comment.\ Math.\ Helv.\ \textbf{39} (1964), 111-164. 

\bibitem[CS18]{CS}
I.\ Cheltsov and C.\ Shramov, \emph{K\"ahler-Einstein Fano threefolds of degree $22$}, 
arXiv:1803.02774v4.

\bibitem[Dem08]{Dem08}
J.-P.\ Demailly, Appendix to I.\ Cheltsov and C.\ Shramov's article. 
``Log canonical thresholds of smooth Fano threefolds” : 
\emph{On Tian's invariant and log canonical thresholds}, 
Russian Math.\ Surveys \textbf{63} (2008), no.\ 5, 945--950. 

\bibitem[Don02]{don}
S.\ Donaldson, \emph{Scalar curvature and stability of toric varieties}, 
J.\ Differential Geom.\ \textbf{62} (2002), no.\ 2, 289--349. 

\bibitem[Don07]{don-alpha}
S.\ Donaldson, \emph{A note on the $\alpha$-invariant of the Mukai-Umemura $3$-fold}, 
arXiv:0711.4357v1.

\bibitem[DS16]{DS}
V.\ Datar and G.\ Sz\'ekelyhidi, \emph{K\"ahler-Einstein metrics along the smooth 
continuity method}, Geom.\ Funct.\ Anal.\ \textbf{26}, no.\ 4, 975--1010. 

\bibitem[ELMNP09]{ELMNP}
L.\ Ein, R.\ Lazarsfeld, M.\ Musta\c{t}\u{a}, M.\ Nakamaye and 
M.\ Popa, \emph{Restricted volumes and base loci of linear series}, 
Amer.\ J.\ Math.\ \textbf{131} (2009), no.\ 3, 607--651. 

\bibitem[Fjn17]{fujino}
O.\ Fujino, \emph{Foundations of the minimal model program}, MSJ Memoirs, \textbf{35}. 
Mathematical Society of Japan, Tokyo, 2017. 

\bibitem[Fjn21]{fujino_hyp}
O.\ Fujino, \emph{Cone theorem and Mori hyperbolicity}, arXiv::2102.11986v1. 

\bibitem[Fjt16]{div_stability}
K.\ Fujita, \emph{On K-stability and the volume functions of 
$\mathbb{Q}$-Fano varieties}, Proc.\ Lond.\ Math.\ Soc.\ \textbf{113} (2016), no.\ 5, 
541--582.

\bibitem[Fjt19a]{vst}
K.\ Fujita, \emph{A valuative criterion for uniform K-stability of $\mathbb{Q}$-Fano 
varieties}, 
J.\ Reine Angew.\ Math.\ \textbf{751} (2019), 309--338.

\bibitem[Fjt19b]{pltK}
K.\ Fujita, \emph{Uniform K-stability and plt blowups of log Fano pairs}, 
Kyoto J.\ Math.\ \textbf{59} (2019), no.\ 2, 399--418. 

\bibitem[Fjt19c]{alpha}
K.\ Fujita, \emph{K-stability of Fano manifolds with not small alpha invariants}, 
J.\ Inst.\ Math.\ Jussieu \textbf{18} (2019), no.\ 3, 519--530. 

\bibitem[Hum75]{humphreys}
J.\ Humphreys, \emph{Linear algebraic groups}, Graduate Texts in Mathematics, No.\ 
\textbf{21}. Springer-Verlag, New York-Heidelberg, 1975. 

\bibitem[KM98]{KoMo}
J.\ Koll{\'a}r and S.\ Mori, \emph{Birational geometry of algebraic varieties},
With the collaboration of C.\ H.\ Clemens and A.\ Corti. 
Cambridge Tracts in Math., \textbf{134},
Cambridge University Press, Cambridge, 1998.

\bibitem[KP18]{KP}
A.\ Kuznetsov and Y.\ Prokhorov, \emph{Prime Fano threefolds of genus $12$ with a 
$\G_m$-action and their automorphisms}, \'Epijournal Geom.\ Alg\'ebrique \textbf{2} 
(2018), Art.\ 3, 14 pp. 

\bibitem[KPS18]{KPS}
A.\ Kuznetsov, Y.\ Prokhorov and C.\ Shramov, \emph{Hilbert schemes of lines and 
conics and automorphism groups of Fano threefolds}, Jpn.\ J.\ Math.\ \textbf{13}
(2018), no.\ 1, 109--185. 

\bibitem[Laz04a]{L1}
R.\ Lazarsfeld, \emph{Positivity in algebraic geometry, I: Classical setting: line bundles 
and linear series}, Ergebnisse der Mathematik und ihrer Grenzgebiete.\ (3) 
\textbf{48}, Springer, Berlin, 2004.

\bibitem[Laz04b]{L2}
R.\ Lazarsfeld, \emph{Positivity in algebraic geometry, II: Positivity for Vector Bundles, 
and Multiplier Ideals}, Ergebnisse der Mathematik und ihrer Grenzgebiete.\ (3) 
\textbf{49}, Springer, Berlin, 2004.

\bibitem[Li17]{li}
C.\ Li, \emph{K-semistability is equivariant volume minimization}, 
Duke Math.\ J.\ \textbf{166} (2017), no.\ 16, 3147--3218.

\bibitem[Mat57]{matsushima}
Y.\ Matsushima, \emph{Sur la structure du groupe d'hom\'eomorphismes analytiques 
d'une certaine vari\'et\'e k\"ahl\'erienne}, 
Nagoya Math.\ J.\ \textbf{11} (1957), 145--150.

\bibitem[MU83]{MU}
S.\ Mukai and H.\ Umemura, \emph{Minimal rational threefolds}, Algebraic geometry 
(Tokyo/Kyoto, 1982), 490--518, Lecture Notes in Math., \textbf{1016}, Springer, Berlin, 
1983. 

\bibitem[Pro90]{Prok}
Y.\ Prokhorov, \emph{Automorphism groups of Fano $3$-folds}, Uspekhi Mat.\ Nauk 
\textbf{45} (1990), no.\ 3 (273), 195--196. 

\bibitem[SZ19]{SZ}
C.\ Stibitz and Z.\ Zhuang, \emph{K-stability of birationally superrigid Fano varieties}, 
Compos.\ Math.\ \textbf{155} (2019), 1845--1852. 

\bibitem[Tia87]{Tia87}
G.\ Tian, 
\emph{On K\"ahler-Einstein metrics on certain K\"ahler manifolds with 
$C_1(M)>0$}, Invent.\ Math.\ \textbf{89} (1987), no.\ 2, 225--246.

\bibitem[Tia97]{tian}
G.\ Tian, \emph{K\"ahler-Einstein metrics with positive scalar curvature}, 
Invent.\ Math.\ \textbf{130} (1997), no.\ 1, 1--37.

\bibitem[Zhu20]{zhuang}
Z.\ Zhuang, \emph{Optimal destabilizing centers and equivariant K-stability}, 
arXiv:2004.09413v1. 
\end{thebibliography}
\end{document}